\begin{document}

\newtheorem{theorem}{Theorem}{}
\newtheorem{lemma}[theorem]{Lemma}{}
\newtheorem{corollary}[theorem]{Corollary}{}
\newtheorem{conjecture}[theorem]{Conjecture}{}
\newtheorem{proposition}[theorem]{Proposition}{}
\newtheorem{axiom}{Axiom}{}
\newtheorem{remark}{Remark}{}
\newtheorem{example}{Example}{}
\newtheorem{exercise}{Exercise}{}
\newtheorem{definition}{Definition}{}

\newcommand{\nc}{\newcommand}

\nc{\gr}{\bullet}
\nc{\pr}{\noindent{\em Proof. }} \nc{\g}{\mathfrak g}
\nc{\n}{\mathfrak n} \nc{\opn}{\overline{\n}}\nc{\h}{\mathfrak h}
\renewcommand{\b}{\mathfrak b}
\nc{\Ug}{U(\g)} \nc{\Uh}{U(\h)} \nc{\Un}{U(\n)}
\nc{\Uopn}{U(\opn)}\nc{\Ub}{U(\b)} \nc{\p}{\mathfrak p}
\renewcommand{\l}{\mathfrak l}
\nc{\z}{\mathfrak z} \renewcommand{\h}{\mathfrak h}
\nc{\m}{\mathfrak m}
\renewcommand{\t}{\mathfrak t}
\renewcommand{\k}{\mathfrak k}
\nc{\opk}{\overline{\k}}
\nc{\opb}{\overline{\b}}

\renewcommand{\thetheorem}{\thesection.\arabic{theorem}}

\renewcommand{\thelemma}{\thesection.\arabic{lemma}}

\renewcommand{\theproposition}{\thesection.\arabic{proposition}}

\renewcommand{\thecorollary}{\thesection.\arabic{corollary}}

\renewcommand{\theremark}{\thesection.\arabic{remark}}

\renewcommand{\thedefinition}{\thesection.\arabic{definition}}

\title{The structure of q-W algebras}

\author{A. Sevostyanov}
\address{ Institute of Pure and Applied Mathematics,
University of Aberdeen \\ Aberdeen AB24 3UE, United Kingdom }

\begin{abstract}
We suggest two explicit descriptions of the Poisson q-W algebras which are Poisson algebras of
regular functions on certain algebraic group analogues of the Slodowy transversal
slices to adjoint orbits in a complex semisimple Lie algebra $\g$. To obtain the first description we introduce
certain projection operators which are analogous to the quasi--classical versions of the so--called Zhelobenko and extremal projection operators. As a byproduct we obtain some new formulas for natural coordinates on Bruhat cells in algebraic groups.
\end{abstract}

\keywords{ Algebraic group, Transversal slice, Poisson manifold}

\maketitle

\pagestyle{myheadings}
\markboth{A. SEVOSTYANOV}{}

\renewcommand{\theequation}{\thesection.\arabic{equation}}


\section{Introduction}

\setcounter{equation}{0}
\setcounter{theorem}{0}

The purpose of this paper is to give two descriptions of the Poisson structure for Poisson q-W algebras which are group--like analogues of ordinary Poisson W-algebras. As algebras Poisson q-W algebras are algebras of regular functions on transversal slices to conjugacy slices in algebraic groups defined in \cite{S6} while their Poisson structures are obtained by Poisson reduction from Poisson structures of Poisson--Lie groups dual to quasi--triangular Poisson Lie groups. If $G$ is a complex semi--simple quasi--triangular Poisson--Lie group used in the definition of q-W algebras then its dual group $G^*$ is solvable and there is a smooth morphism of manifolds $q:G^*\rightarrow G$ the image of which is dense in $G$. Actually this morphism is a finite cover over its image, so that one can equip $G$ with a Poisson structure such that $q$ becomes a Poisson mapping. Denote by $G_*$ the Poisson manifold obtained this way. In order to define the corresponding Poisson q-W--algebra one has to fix a coisotropic submanifold $\mathcal{C}\subset G_*$ and then factorize it over a free action of a subgroup $N\subset G$ on $\mathcal{C}$ induced by the conjugation action of $N$ on $G$. The reduced Poisson manifold $\Sigma=\mathcal{C}/N$ has an explicit description as a submanifold in $\mathcal{C}$ and carries the reduced Poisson structure.

In fact $G$ and $N$ are closed algebraic groups and $\mathcal{C}\subset G_*$ is a closed subvariety, so that $\Sigma$ is closed subvariety in $G_*$, and the Poisson q-W algebra is the Poisson algebra of regular functions on $\Sigma$. Our first task is to obtain a projection operator $\Pi:\mathbb{C}[\mathcal{C}]\rightarrow \mathbb{C}[\mathcal{C}]^N=\mathbb{C}[\Sigma]$ with the help of which one can explicitly describe the reduced Poisson structure in terms of the Poisson bracket on $G_*$ (see Proposition \ref{pred2}).

The projection operator $\Pi$ is quite remarkable. It is an analogue of the quasi--classical versions of the so--called Zhelobenko and extremal projection operators introduced in \cite{Z2} to describe subspaces of singular vectors in modules from the BGG category $\mathcal{O}$. In turn, as it was observed in \cite{SZ}, the quasi--classical versions of Zhelobenko and extremal projection operators are examples of the realization of the following simple construction.

Let $U$ be an algebraic group, and $M$ an algebraic variety equipped with a regular action of $U$. Assume that there exists a cross--section $X\subset M$ for this action in the sense that the map
\begin{equation}\label{iso}
U\times X \rightarrow M,~(g,x)\mapsto gx,~g\in U,x\in X
\end{equation}
is an isomorphism of varieties.

Denote by $\mathbb{C}[M]$ the algebra of regular functions on $M$. The group $U$ naturally acts on $\mathbb{C}[M]$. Let $\mathbb{C}[M]^U$ be the subspace of $U$--invariant elements of $\mathbb{C}[M]$.
Define the projection operator $\overline{P}:\mathbb{C}[M]\rightarrow \mathbb{C}[M]^U$ as follows
\begin{equation}\label{P}
(\overline{P}f)(gx)=f(x)=f(g^{-1}gx)=(gf)(gx),~g\in U,x\in X.
\end{equation}
If for $y\in M$ we denote by $g(y)\in U$ the unique element such that $y=g(y)x(y)$ for a unique $x(y)\in X$ then
\begin{equation}\label{P1}
(\overline{P}f)(y)=(g(y)f)(y).
\end{equation}
The operator $\overline{P}$ is called the projection operator corresponding to isomorphism (\ref{iso}).

If every element of $U$ can be uniquely represented as a product of elements from subgroups $U_1, \ldots, U_n$, i.e. $U=U_1\cdot \ldots \cdot U_n$, then the operator $\overline{P}$ can be expressed as a composition of operators $\overline{P}_i$,
\begin{equation}\label{Pi}
(\overline{P}_if)(y)=(g_i(y)f)(y),~g(y)=g_1(y)\ldots g_n(y),~g_i(y)\in U_i,
\end{equation}
\begin{equation}\label{PC}
(\overline{P}f)(y)=(\overline{P}_1\ldots \overline{P}_nf)(y).
\end{equation}

The projection operator $\Pi:\mathbb{C}[\mathcal{C}]\rightarrow \mathbb{C}[\mathcal{C}]^N=\mathbb{C}[\Sigma]$ defined in Proposition \ref{proj} is of the same type with $U=N$, $M=\mathcal{C}$ and $U_i$ being one--parametric subgroups corresponding to some roots. Technically the construction is quite involved and as a preliminary axillary exercise we obtain some new formulas for natural coordinates on Bruhat cells in algebraic groups (see Proposition \ref{Br}).

In Proposition \ref{pred1} we give another explicit compact description of the reduced Poisson bracket on $\Sigma$. Formula (\ref{tau1}) for the reduced Poisson structure obtained in that proposition is a simple consequence of the general definition of the reduced Poisson structure and of the explicit description of the reduced space $\Sigma$. In case of the usual W-algebras such description was obtained in \cite{KDV}.  But this description is much more complicated than in case of q-W algebras since the corresponding reduced spaces in the W-algebra case are the Slodowy slices, and they do not have a nice geometric description compatible with the Poisson structure.


\section{Analogues of Zhelobenko operators for q-W algebras}
\label{Zhel}

\setcounter{equation}{0}
\setcounter{theorem}{0}

In this section we construct analogues of the classical versions of Zhelobenko operators for q-W--algebras. First we recall the definition of the coisotropic submanifolds $\mathcal{C}$, of the subgroups $N$ and of the slices $\Sigma$. They are associated to conjugacy classes of Weyl group elements and are defined in terms of certain systems of positive roots associated to conjugacy classes of Weyl group elements. We recall these definitions and the related properties of $\mathcal{C}$, $N$ and $\Sigma$ following \cite{S6,S10,S11,S12}. To define the projection operator $\Pi$ we shall also need the normal orderings of the systems of positive roots associated to conjugacy classes of Weyl group elements in \cite{S10}.

Let $G$ be a complex semisimple connected simply--connected algebraic group, $\g$ its Lie
algebra, $H$ a maximal torus in $G$. Denote by $\h$ the Cartan subalgebra in $\g$ corresponding to $H$. Let $\Delta$ be the root system of the pair $(\g,\h)$, $W$ the Weyl group of the pair $(\g,\h)$. For any root $\alpha\in \Delta$ we denote by $\alpha^\vee\in \h$ the corresponding coroot and by $s_\alpha\in W$ the reflection with respect to $\alpha$. Let $X_\alpha\in \g$ be a non--zero root vector corresponding to $\alpha$.

Let $s$ be an element of the Weyl group $W$ and $\h_{\mathbb{R}}$ the real form of $\h$, the real linear span of simple coroots in $\h$. The set of roots $\Delta$ is a subset of the dual space $\h_\mathbb{R}^*$.
By Theorem C in \cite{C} $s$ can be represented as a product of two involutions,
\begin{equation}\label{inv}
s=s^1s^2,
\end{equation}
where $s^1=s_{\gamma_1}\ldots s_{\gamma_n}$, $s^2=s_{\gamma_{n+1}}\ldots s_{\gamma_{l'}}$, the roots in each of the sets $\gamma_1, \ldots, \gamma_n$ and ${\gamma_{n+1}}, \ldots, {\gamma_{l'}}$ are positive and mutually orthogonal, and
the roots $\gamma_1, \ldots, \gamma_{l'}$ form a linear basis of $\h'^*$.

The Weyl group element $s$ naturally acts on $\h_{\mathbb{R}}$ as an orthogonal transformation with respect to the scalar product induced by the Killing form of $\g$. Using the spectral theory of orthogonal transformations we can decompose $\h_{\mathbb{R}}$ into a direct orthogonal sum of $s$--invariant subspaces,
\begin{equation}\label{hdec}
\h_\mathbb{R}=\bigoplus_{i=0}^{K} \h_i,
\end{equation}
where we assume that $\h_0$ is the linear subspace of $\h_{\mathbb{R}}$ fixed by the action of $s$, and each of the other subspaces $\h_i\subset \h_\mathbb{R}$, $i=1,\ldots, K$, is either two--dimensional or one--dimensional and the Weyl group element $s$ acts on it as rotation with angle $\theta_i$, $0<\theta_i<\pi$ or as reflection with respect to the origin, respectively. Note that since $s$ has finite order $\theta_i=\frac{2\pi n_i}{m_i}$, $n_i,m_i\in \{1,2,3,\ldots \}$. By Proposition 3.1 in \cite{S11} the subspaces $\h_i$ can be chosen in such a way that each of them is invariant with respect to the involutions $s^1$ and $s^2$, and if $\h_i$ is one--dimensional one of the involutions acts on it in the trivial way. We shall assume that the subspaces $\h_i$ are chosen in this way. We shall also assume that the one--dimensional subspaces $\h_i$ on which $s^1$ acts by multiplication by $-1$ are immediately preceding $\h_0$ in sum (\ref{hdec}).

Since the number of roots in the root system $\Delta$ is finite one can always choose elements $h_i\in \h_i$, $i=0,\ldots, K$, such that $h_i(\alpha)\neq 0$ for any root $\alpha \in \Delta$ which is not orthogonal to the $s$--invariant subspace $\h_i$ with respect to the natural pairing between $\h_{\mathbb{R}}$ and $\h_{\mathbb{R}}^*$.

Now we consider certain $s$--invariant subsets of roots ${\Delta}_i$, $i=0,\ldots, K$, defined as follows
\begin{equation}\label{di}
{{\Delta}}_i=\{ \alpha\in \Delta: h_j(\alpha)=0, j>i,~h_i(\alpha)\neq 0 \},
\end{equation}
where we formally assume that $h_{K+1}=0$.
Note that for some indexes $i$ the subsets ${{\Delta}}_i$ are empty, and that the definition of these subsets depends on the order of terms in direct sum (\ref{hdec}).

 Now consider the nonempty $s$--invariant subsets of roots ${\Delta}_{i_k}$, $k=0,\ldots, M$.
For convenience we assume that indexes $i_k$ are labeled in such a way that $i_j<i_k$ if and only if $j<k$.
According to this definition ${\Delta}_{0}=\{\alpha \in \Delta: s\alpha=\alpha\}$ is the set of roots fixed by the action of $s$. Observe also that the root system $\Delta$ is the disjoint union of the subsets ${\Delta}_{i_k}$,
$$
\Delta=\bigcup_{k=0}^{M}{\Delta}_{i_k}.
$$

Suppose that the direct sum $\bigoplus_{k=1}^r\h_{i_k}$ of  the one--dimensional subspaces $\h_{i_k}$ on which $s^1$ acts by multiplication by $-1$ and $s^2$ acts as the identity transformation is not trivial.
Since  the one--dimensional subspaces $\h_i$ on which $s^1$ acts by multiplication by $-1$ are immediately preceding $\h_0$ in sum (\ref{hdec}), the roots from the union $\bigcup_{k=1}^r{\Delta}_{i_k}$ must be orthogonal to all subspaces $\h_{i_k}$ on which $s^1$ does not act by multiplication by $-1$ and to all roots from the set $\gamma_{n+1}, \ldots \gamma_{l'}$ as $s^2$ acts trivially on $\bigoplus_{k=1}^r\h_{i_k}$. Pick up a root $\gamma \in \bigcup_{k=1}^r{\Delta}_{i_k}$. Then $s^1\gamma=-\gamma$ and by our choice $\gamma$ is orthogonal to the roots $\gamma_{n+1}, \ldots \gamma_{l'}$. Therefore $s^1_0=s^1s_\gamma$ is an involution the dimension of the fixed point space of which is equal to the dimension of the fixed point space of the involution $s^1$ plus one, and $s^2_0=s_\gamma s^2$ is another involution the dimension of the fixed point space of which is equal to the dimension of the fixed point space of the involution $s^1$ minus one. We also have a decomposition $s=s^1_0s^2_0$.

Now we can apply the above construction of the system of positive roots to the new decomposition of $s$. Iterating this procedure we shall eventually arrive to the situation when the direct sum $\bigoplus_{k=1}^r\h_{i_k}$ of  the subspaces $\h_{i_k}$ on which $s^1$ acts by multiplication by $-1$ is trivial. Form now on we shall only consider decompositions $s=s^1s^2$ which satisfy this property. This implies
\begin{equation}\label{cond2}
s^2\alpha=\alpha \Rightarrow \alpha \in \Delta_0.
\end{equation}

Now assume that
\begin{equation}\label{cond}
|h_{i_k}(\alpha)|>|\sum_{l\leq j<k}h_{i_j}(\alpha)|, ~{\rm for~any}~\alpha\in {\Delta}_{i_k},~k=0,\ldots, M,~l<k.
\end{equation}
Condition (\ref{cond}) can be always fulfilled by suitable rescalings of the elements $h_{i_k}$.

Consider the element
\begin{equation}\label{hwb}
\bar{h}=\sum_{k=0}^{M}h_{i_k}\in \h_\mathbb{R}.
\end{equation}

From definition (\ref{di}) of the sets ${\Delta}_i$ we obtain that for $\alpha \in {\Delta}_{i_k}$
\begin{equation}\label{dech}
\bar{h}(\alpha)=\sum_{j\leq k}h_{i_j}(\alpha)=h_{i_k}(\alpha)+\sum_{j< k}h_{i_j}(\alpha)
\end{equation}
Now condition (\ref{cond}), the previous identity and the inequality $|x+y|\geq ||x|-|y||$ imply that for $\alpha \in {\Delta}_{i_k}$ we have
$$
|\bar{h}(\alpha)|\geq ||h_{i_k}(\alpha)|-|\sum_{j< k}h_{i_j}(\alpha)||>0.
$$
Since $\Delta$ is the disjoint union of the subsets ${\Delta}_{i_k}$, $\Delta=\bigcup_{k=0}^{M}{\Delta}_{i_k}$, the last inequality ensures that  $\bar{h}$ belongs to a Weyl chamber of the root system $\Delta$, and one can define the subset of positive roots $\Delta_+^s$ with respect to that chamber. We call $\Delta_+^s$ a system of positive roots associated to $s$.

Recall that an ordering of a set of positive roots $\Delta_+$ is called normal if for any three roots $\alpha,~\beta,~\gamma$ such that $\gamma=\alpha+\beta$ we have either $\alpha<\gamma<\beta$ or $\beta<\gamma<\alpha$. Let $\alpha_1,\ldots, \alpha_l$ be the simple roots in $\Delta_+$, $s_1, \ldots ,s_l$ the corresponding simple reflections. Let $\overline{w}$ be the element of $W$ of maximal length with respect to the system $s_1, \ldots ,s_l$ of simple reflections. For any reduced decomposition $\overline{w}=s_{i_1}\ldots s_{i_D}$ of $\overline{w}$ the ordering
$$
\beta_1=\alpha_{i_1},\beta_2=s_{i_1}\alpha_{i_2},\ldots,\beta_D=s_{i_1}\ldots s_{i_{D-1}}\alpha_{i_D}
$$
is a normal ordering in $\Delta_+$, and there is a one--to--one correspondence between normal orderings of $\Delta_+$ and reduced decompositions of $\overline{w}$ (see \cite{Z1}).

From this fact and from properties of Coxeter groups it follows that any two normal orderings in $\Delta_+$ can be reduced to each other by the so--called elementary transpositions (see \cite{Z1}, Theorem 1). The elementary transpositions for rank 2 root systems are inversions of the following normal orderings (or the inverse normal orderings):
\begin{equation}\label{rank2}
\begin{array}{lr}
\alpha,~\beta & A_1+A_1 \\
\\
\alpha,~\alpha+\beta,~\beta & A_2 \\
\\
\alpha,~\alpha+\beta,~\alpha+2\beta,~\beta & B_2 \\
\\
\alpha,~\alpha+\beta,~2\alpha+3\beta,~\alpha+2\beta,~\alpha+3\beta,~\beta & G_2
\end{array}
\end{equation}
where it is assumed that $(\alpha,\alpha)\geq (\beta,\beta)$. Moreover, any normal ordering in a rank 2 root system is one of orderings (\ref{rank2}) or one of the inverse orderings.

In general an elementary inversion of a normal ordering in a set of positive roots $\Delta_+$  is the inversion of an ordered segment of form (\ref{rank2}) (or of a segment with the inverse ordering) in the ordered set $\Delta_+$, where $\alpha-\beta\not\in \Delta$.

Let $\beta_{1}, \beta_{2}, \ldots, \beta_{D}$ be a normal ordering
of a positive root system $\Delta_+$. Then following \cite{KT3} one can introduce the corresponding circular normal ordering of the root system ${\Delta}$ where
the roots in ${\Delta}$ are located on a circle in
the following way
\begin{center}
\setlength{\unitlength}{0.6mm}
     \begin{picture}(180,120)(-40,0)
     \put(0,50){\makebox(0,0){$\beta_1$}}
     \put(4,69){\makebox(0,0){$\beta_2$}}
     \put(15,85){\circle*{1.5}}
     \put(31,96){\circle*{1.5}}
     \put(50,100){\circle*{1.5}}
     \put(69,96){\circle*{1.5}}
     \put(85,85){\circle*{1.5}}
     \put(96,69){\makebox(0,0){$\beta_D$}}
     \put(100,50){\makebox(0,0){$-\beta_1$}}
     \put(96,31){\makebox(0,0){$-\beta_2$}}
     \put(85,15){\circle*{1.5}}
     \put(69,4){\circle*{1.5}}
     \put(50,0){\circle*{1.5}}
     \put(31,4){\circle*{1.5}}
     \put(15,15){\circle*{1.5}}
     \put(4,31) {\makebox(0,0){-$\beta_D$}}
     \put(64,88){\vector (3,-2){10}}
     \put(36,12){\vector (-3,2){10}}
     \end{picture}
\end{center}
\begin{center}
 Fig.2
\end{center}

Let $\alpha,\beta\in \Delta$. One says that the segment $[\alpha, \beta]$ of the circle
is minimal if it does not contain the opposite roots $-\alpha$ and $-\beta$ and the root $\beta$ follows after $\alpha$ on the circle above, the circle being oriented clockwise.
In that case one also says that $\alpha < \beta$ in the sense of the circular normal ordering,
\begin{equation}\label{noc}
\alpha < \beta \Leftrightarrow {\rm the ~segment}~ [\alpha, \beta]~{\rm  of~ the ~circle~
is~ minimal}.
\end{equation}

Later we shall need the following property of minimal segments which is a direct consequence of Proposition 3.3 in \cite{kh-t}.
\begin{lemma}\label{minsegm}
Let $[\alpha, \beta]$ be a minimal segment in a circular normal ordering of a root system $\Delta$. Then if $\alpha+\beta$ is a root we have
$$
\alpha<\alpha+\beta<\beta.
$$
\end{lemma}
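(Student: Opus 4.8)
The plan is to reduce everything to the convexity property that \emph{defines} a normal ordering, namely that whenever $\mu,\nu\in\Delta_+$ with $\mu<\nu$ and $\mu+\nu\in\Delta$ one has $\mu<\mu+\nu<\nu$, together with the central symmetry of the circular picture in Fig.~2. First I would record that the antipodal map $\gamma\mapsto-\gamma$ is a rotation of the circle by a half-turn: it preserves the clockwise orientation and sends each root to its opposite, hence it carries minimal segments to minimal segments and satisfies $\alpha<\beta\Leftrightarrow-\alpha<-\beta$ in the circular ordering. This symmetry lets me dispose of the sign cases two at a time.

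Then I would split according to the signs of $\alpha$ and $\beta$. If both are positive, minimality of $[\alpha,\beta]$ forces $\beta$ to follow $\alpha$ inside the upper arc (the complementary clockwise segment would sweep through the whole lower arc and in particular through $-\alpha$), so $\alpha<\beta$ in the normal ordering of $\Delta_+$; since $\alpha+\beta$ is then a positive root, the defining convexity property gives $\alpha<\alpha+\beta<\beta$, and the middle root, being positive, indeed lies on the arc between $\alpha$ and $\beta$. The case where both $\alpha$ and $\beta$ are negative follows at once by applying the antipodal symmetry to $-\alpha,-\beta$ (both positive) and negating the resulting chain.

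The substance is the mixed case, say $\alpha>0$ and $\beta=-\delta<0$ with $\delta\in\Delta_+$ (the opposite mixed case reduces to this one by the antipodal symmetry). Writing $\alpha=\beta_i$ and $\delta=\beta_j$ in the normal ordering $\beta_1,\ldots,\beta_D$, I would first translate minimality into a combinatorial inequality: the clockwise segment from $\beta_i$ to $-\beta_j$ must avoid both $-\alpha=-\beta_i$ and $-\beta=\beta_j$, and tracing the cyclic list $\beta_1,\ldots,\beta_D,-\beta_1,\ldots,-\beta_D$ shows this happens exactly when $j<i$, i.e. $\delta<\alpha$ in the normal ordering. Now I distinguish the sign of $\alpha+\beta=\alpha-\delta$. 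If $\alpha-\delta$ is positive, I apply convexity to the decomposition $\alpha=\delta+(\alpha-\delta)$: since $\delta<\alpha$ this yields $\delta<\alpha<\alpha-\delta$, so $\alpha+\beta$ is a positive root lying clockwise after $\alpha$ but still in the upper arc, whence $\alpha<\alpha+\beta<-\delta=\beta$ on the circle. If instead $\alpha-\delta$ is negative, I apply convexity to $\delta=\alpha+(\delta-\alpha)$: from $\delta<\alpha$ this forces $\delta-\alpha<\delta<\alpha$, so on the circle the opposite root $\alpha+\beta=-(\delta-\alpha)$ occupies the lower-arc position of the index of $\delta-\alpha$, which precedes that of $-\delta$ and is reached before $-\alpha$; tracking these positions gives $\alpha<\alpha+\beta<\beta$ once more.

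I expect the main obstacle to be exactly this bookkeeping in the mixed case: correctly converting the geometric minimality condition into the inequality $\delta<\alpha$, and then, after invoking convexity for the appropriate decomposition, verifying that the root $-(\delta-\alpha)$ really falls strictly between $\alpha$ and $-\delta$ on the oriented circle without overshooting $-\alpha$. Everything else is either the definition of a normal ordering or the antipodal symmetry. Alternatively, one could restrict to the rank-two subsystem $\Delta\cap(\mathbb{R}\alpha+\mathbb{R}\beta)$, on which the induced circular ordering is again a circular normal ordering, and check the finitely many orderings in (\ref{rank2}) directly; this is the route suggested by Proposition 3.3 of \cite{kh-t}.
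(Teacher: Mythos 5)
Your argument is correct, but it is genuinely different from what the paper does: the paper offers no proof at all for this lemma, simply declaring it ``a direct consequence of Proposition 3.3 in \cite{kh-t}.'' You instead give a self-contained elementary verification, and the case analysis checks out. The reduction of minimality to the index inequality is right in all three sign configurations (both positive forces $\alpha<\beta$ in $\Delta_+$; the mixed case $\alpha=\beta_i$, $\beta=-\beta_j$ forces $j<i$), the antipodal half-turn symmetry legitimately halves the work, and in the mixed case your two applications of the defining convexity property --- to $\alpha=\delta+(\alpha-\delta)$ when $\alpha-\delta\in\Delta_+$, giving $\delta<\alpha<\alpha-\delta$, and to $\delta=\alpha+(\delta-\alpha)$ when $\delta-\alpha\in\Delta_+$, giving $\delta-\alpha<\delta<\alpha$ --- each pin the index of $\pm(\alpha-\delta)$ into the arc strictly between $\beta_i$ and $-\beta_j$ without crossing $-\beta_i$. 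What your route buys is transparency and independence from the quantum-group literature: the reader sees exactly which feature of a normal ordering (the three-term convexity condition) is responsible for the circular statement. What the citation buys is brevity and a stronger statement in reserve, since Proposition 3.3 of \cite{kh-t} controls all decompositions $\gamma=\alpha+\beta$ within a minimal segment, not just the single sum $\alpha+\beta$ of the endpoints; your closing remark about restricting to the rank-two subsystem $\Delta\cap(\mathbb{R}\alpha+\mathbb{R}\beta)$ is essentially that route. The only presentational caveat is that you should state explicitly that endpoints are included in the segment and that $\alpha+\beta\neq 0$ is automatic (otherwise $-\beta=\alpha$ would lie in $[\alpha,\beta]$, contradicting minimality), so the dichotomy ``$\alpha-\delta$ positive or negative'' in the mixed case is exhaustive.
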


The following proposition is a refinement of Proposition 5.1 in \cite{S10}.
\begin{proposition}\label{pord}
Let $s\in W$ be an element of the Weyl group $W$ of the pair $(\g,\h)$, $\Delta$ the root system of the pair $(\g,\h)$, $\Delta_+^s$ a system of positive roots associated to $s$. For any Weyl group element $w\in W$ we denote $\Delta_w=\{\alpha\in \Delta_+^s:w\alpha\in \Delta_-^s\}$, where $\Delta_-^s=-\Delta_+^s$. Then
the decomposition $s=s^1s^2$ is reduced in the sense that ${l}(s)={l}(s^2)+{l}(s^1)$, where ${l}(\cdot)$ is the length function in $W$ with respect to the system of simple roots in $\Delta_+^s$, and $\Delta_{s}=\Delta_{s^{2}}\bigcup s^2(\Delta_{s^{1}})$, $\Delta_{s^{-1}}=\Delta_{s^{1}}\bigcup s^1(\Delta_{s^{2}})$ (disjoint unions). Here $s^1,s^2$ are the involutions entering decomposition (\ref{inv}), $s^1=s_{\gamma_1}\ldots s_{\gamma_n}$, $s^2=s_{\gamma_{n+1}}\ldots s_{\gamma_{l'}}$ and satisfying (\ref{cond2}), the roots in each of the sets $\gamma_1, \ldots, \gamma_n$ and ${\gamma_{n+1}},\ldots, {\gamma_{l'}}$ are positive and mutually orthogonal.

Moreover, there is a normal ordering of the root system $\Delta_+^s$ of the following form
\begin{eqnarray}
\beta_1^1,\ldots, \beta_t^1,\beta_{t+1}^1, \ldots,\beta_{t+\frac{p-n}{2}}^1, \gamma_1,\beta_{t+\frac{p-n}{2}+2}^1, \ldots , \beta_{t+\frac{p-n}{2}+n_1}^1, \gamma_2, \nonumber \\
\beta_{t+\frac{p-n}{2}+n_1+2}^1 \ldots , \beta_{t+\frac{p-n}{2}+n_2}^1, \gamma_3,\ldots, \gamma_n, \beta_{t+p+1}^1,\ldots, \beta_{l(s^1)}^1,\ldots, \label{NO} \\
\beta_1^2,\ldots, \beta_q^2, \gamma_{n+1},\beta_{q+2}^2, \ldots , \beta_{q+m_1}^2, \gamma_{n+2}, \beta_{q+m_1+2}^2,\ldots , \beta_{q+m_2}^2, \gamma_{n+3},\ldots,  \nonumber \\
\gamma_{l'},\beta_{q+m_{l(s^2)}+1}^2, \ldots,\beta_{2q+2m_{l(s^2)}-(l'-n)}^2, \beta_{2q+2m_{l(s^2)}-(l'-n)+1}^2,\ldots, \beta_{l(s^2)}^2, \nonumber \\
\beta_1^0, \ldots, \beta_{D_0}^0, \nonumber
\end{eqnarray}
where
\begin{eqnarray*}
\{\beta_1^1,\ldots, \beta_t^1,\beta_{t+1}^1, \ldots,\beta_{t+\frac{p-n}{2}}^1, \gamma_1,\beta_{t+\frac{p-n}{2}+2}^1, \ldots , \beta_{t+\frac{p-n}{2}+n_1}^1, \gamma_2, \nonumber \\
\beta_{t+\frac{p-n}{2}+n_1+2}^1 \ldots , \beta_{t+\frac{p-n}{2}+n_2}^1, \gamma_3,\ldots, \gamma_n, \beta_{t+p+1}^1,\ldots, \beta_{l(s^1)}^1\}=\Delta_{s^1},
\end{eqnarray*}
\begin{eqnarray*}
\{\beta_{t+1}^1, \ldots,\beta_{t+\frac{p-n}{2}}^1, \gamma_1,\beta_{t+\frac{p-n}{2}+2}^1, \ldots , \beta_{t+\frac{p-n}{2}+n_1}^1, \gamma_2, \nonumber \\
\beta_{t+\frac{p-n}{2}+n_1+2}^1 \ldots , \beta_{t+\frac{p-n}{2}+n_2}^1, \gamma_3,\ldots, \gamma_n\}=\{\alpha\in \Delta_+^s|s^1(\alpha)=-\alpha\},
\end{eqnarray*}
\begin{eqnarray*}
\{\beta_1^2,\ldots, \beta_q^2, \gamma_{n+1},\beta_{q+2}^2, \ldots , \beta_{q+m_1}^2, \gamma_{n+2}, \beta_{q+m_1+2}^2,\ldots , \beta_{q+m_2}^2, \gamma_{n+3},\ldots,  \nonumber \\
\gamma_{l'},\beta_{q+m_{l(s^2)}+1}^2, \ldots,\beta_{2q+2m_{l(s^2)}-(l'-n)}^2, \beta_{2q+2m_{l(s^2)}-(l'-n)+1}^2,\ldots, \beta_{l(s^2)}^2\}=\Delta_{s^2},
\end{eqnarray*}
\begin{eqnarray*}
\{\gamma_{n+1},\beta_{q+2}^2, \ldots , \beta_{q+m_1}^2, \gamma_{n+2}, \beta_{q+m_1+2}^2,\ldots , \beta_{q+m_2}^2, \gamma_{n+3},\ldots,  \nonumber \\
\gamma_{l'},\beta_{q+m_{l(s^2)}+1}^2, \ldots,\beta_{2q+2m_{l(s^2)}-(l'-n)}^2\}=\{\alpha\in \Delta_+^s|s^2(\alpha)=-\alpha\},
\end{eqnarray*}
\begin{equation*}
\{\beta_1^0, \ldots, \beta_{D_0}^0\}=\{\alpha\in \Delta_+^s|s(\alpha)=\alpha\}.
\end{equation*}

The length of the ordered segment $\Delta_{\m_+}\subset \Delta$ in normal ordering (\ref{NO}),
\begin{eqnarray}
\Delta_{\m_+}=\gamma_1,\beta_{t+\frac{p-n}{2}+2}^1, \ldots , \beta_{t+\frac{p-n}{2}+n_1}^1, \gamma_2, \beta_{t+\frac{p-n}{2}+n_1+2}^1 \ldots , \beta_{t+\frac{p-n}{2}+n_2}^1, \nonumber \\
\gamma_3,\ldots, \gamma_n, \beta_{t+p+1}^1,\ldots, \beta_{l(s^1)}^1,\ldots, \beta_1^2,\ldots, \beta_q^2, \label{dn} \\
\gamma_{n+1},\beta_{q+2}^2, \ldots , \beta_{q+m_1}^2, \gamma_{n+2}, \beta_{q+m_1+2}^2,\ldots , \beta_{q+m_2}^2, \gamma_{n+3},\ldots, \gamma_{l'}, \nonumber
\end{eqnarray}
is equal to
\begin{equation}\label{dimm}
D-(\frac{l(s)-l'}{2}+D_0),
\end{equation}
where $D$ is the number of roots in $\Delta_+^s$, $l(s)$ is the length of $s$ and $D_0$ is the number of positive roots fixed by the action of $s$.

For any two roots $\alpha, \beta\in \Delta_{\m_+}$ such that $\alpha<\beta$ the sum $\alpha+\beta$ cannot be represented as a linear combination $\sum_{k=1}^qc_k\gamma_{i_k}$, where $c_k\in \{0,1,2,\ldots \}$ and $\alpha<\gamma_{i_1}<\ldots <\gamma_{i_k}<\beta$.

The roots from the set $\Delta_s$ form a minimal segment in $\Delta_+^s$ of the form $\gamma, \ldots , \beta_{l(s^2)}^2$ which contains $\Delta_{s^2}$.

Normal ordering (\ref{NO}) also satisfies the property that for any $\alpha\in ({\Delta}_{i_k})_+$ such that $s\alpha \in ({\Delta}_{i_k})_+$ one has $s\alpha>\alpha$, and if $\beta, \gamma\in {\Delta}_{i_{j}}$, $j<k$ and $s\alpha+\beta, \alpha+\gamma\in \Delta$ then $s\alpha+\beta,\alpha+\gamma\in \Delta_+^s$ and $s\alpha+\beta>\alpha+\gamma$.

In particular, for any $\alpha \in \Delta_+^s$, $\alpha\not\in \Delta_0$ and any $\alpha_0\in \Delta_0$ such that $s\alpha \in \Delta_+^s$ one has $s\alpha>\alpha$ and if $s\alpha +\alpha_0\in \Delta$ then $s\alpha +\alpha_0>\alpha$.
\end{proposition}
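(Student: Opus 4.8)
The plan is to take the normal ordering supplied by Proposition 5.1 of \cite{S10} as a starting point and refine it. I would first settle the reducedness and the two disjoint--union formulas. Recall the inversion--set identity: whenever $l(uv)=l(u)+l(v)$ one has the disjoint union $\Delta_{uv}=\Delta_v\cup v^{-1}(\Delta_u)$, where $\Delta_w=\{\alpha\in\Delta_+^s:w\alpha\in\Delta_-^s\}$ and $|\Delta_w|=l(w)$. Applying this with $u=s^1$, $v=s^2$, and using that $s^2$ is an involution so that $(s^2)^{-1}=s^2$, gives $\Delta_s=\Delta_{s^2}\cup s^2(\Delta_{s^1})$; since $s^{-1}=s^2s^1$, the same identity with the roles reversed gives $\Delta_{s^{-1}}=\Delta_{s^1}\cup s^1(\Delta_{s^2})$. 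Thus both formulas are equivalent to $l(s)=l(s^1)+l(s^2)$, and I would prove this reducedness by verifying directly that the union $\Delta_{s^2}\cup s^2(\Delta_{s^1})$ is disjoint, i.e. that no positive root negated by $s^1$ is carried by $s^2$ to a root negated by $s^2$; this is where condition (\ref{cond2}) and the mutual orthogonality of the $\gamma_i$ inside each involution enter.

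Next I would construct the interlaced ordering (\ref{NO}). By \cite{Z1} normal orderings of $\Delta_+^s$ correspond bijectively to reduced words for the longest element, and any two are connected by the elementary inversions of the rank--$2$ segments (\ref{rank2}). Starting from the ordering of \cite{S10}, in which $\Delta_{s^1}$, $\Delta_{s^2}$ and $\Delta_0$ already appear as consecutive blocks, I would use these elementary inversions --- in particular the commuting $A_1+A_1$ moves available because the $\gamma_i$ within one involution are mutually orthogonal (so their pairwise differences are not roots) --- to slide each $\gamma_i$ into the position displayed in (\ref{NO}). The identifications of the inner sub--segments with $\{\alpha:s^i\alpha=-\alpha\}$ then follow from the observation that $s^i$, being the reflection in the span of its defining roots, negates exactly the roots lying in that span, and these occupy the portion of the block $\Delta_{s^i}$ bounded by the extreme $\gamma$'s.

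For the length of $\Delta_{\m_+}$ I would count directly in (\ref{NO}). The segment (\ref{dn}) is obtained from $\Delta_+^s$ by discarding the $D_0$ roots of the block $\Delta_0$ together with the two outer pieces of the involution blocks --- the part of $\Delta_{s^1}$ preceding $\gamma_1$ and the part of $\Delta_{s^2}$ following $\gamma_{l'}$. Each involution block is symmetric about its negated middle: a root $\alpha\in\Delta_{s^i}$ with $s^i\alpha\neq-\alpha$ is paired with the distinct positive root $-s^i\alpha\in\Delta_{s^i}$, so the head and the tail of each block have equal size. Tracking the $l'$ roots $\gamma_i$, which sit in the negated middles rather than in these paired halves, the two discarded outer pieces have total cardinality $\frac{1}{2}(l(s^1)+l(s^2)-l')=\frac{1}{2}(l(s)-l')$, and subtracting this and $D_0$ from $D$ yields (\ref{dimm}).

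The arithmetic assertions are the heart of the statement and, I expect, the main obstacle. The guiding idea is that the circular ordering (\ref{noc}) is compatible with the block grading of $\Delta$ coming from the functionals $h_{i_k}$: by (\ref{di}) and the domination condition (\ref{cond}), for $\alpha\in\Delta_{i_k}$ the quantity $\bar h(\alpha)$ has the sign and, up to controlled lower--order corrections, the magnitude of its top component $h_{i_k}(\alpha)$. Since each $\h_{i_k}$ is $s$--invariant, $s$ preserves $\Delta_{i_k}$ and acts on $\h_{i_k}$ as the rotation by $\theta_i$; this rotation moves a root forward along the circle, which gives $s\alpha>\alpha$ whenever $\alpha$ and $s\alpha$ both lie in $(\Delta_{i_k})_+$, and in particular the special case with $\alpha_0\in\Delta_0$. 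For the comparison $s\alpha+\beta>\alpha+\gamma$ with $\beta,\gamma\in\Delta_{i_j}$, $j<k$, and for the non--representability of a sum $\alpha+\beta$ of two roots of $\Delta_{\m_+}$ as a positive integral combination $\sum_k c_k\gamma_{i_k}$ with intervening $\gamma_{i_k}$, I would combine Lemma \ref{minsegm} with (\ref{cond}): the contributions of the lower blocks $\Delta_{i_j}$ perturb the dominant $h_{i_k}$--component of $\bar h$ by too little to reverse the ordering, while the orthogonality of the $\gamma_i$ forces any such decomposition to contradict the height bookkeeping along the ordered segment. The delicate point is obtaining these estimates uniformly across all blocks at once; I would manage this by induction on $k$, feeding in the iterative refinement of the decomposition $s=s^1s^2$ carried out before (\ref{cond2}) so that at each stage only one block's rotation must be controlled.
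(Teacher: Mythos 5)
Your overall architecture --- reduce the two disjoint--union formulas to a single disjointness statement for inversion sets via the cocycle identity, then massage the ordering of \cite{S10} by elementary transpositions --- is close in spirit to the paper, and your count for (\ref{dimm}) (pairing $\alpha\leftrightarrow -s^i\alpha$ to see that each involution block splits symmetrically about its negated middle, so the two discarded outer pieces total $\frac{1}{2}(l(s)-l')$) is essentially the right computation. But there are two genuine gaps. First, the disjointness you propose to verify is the wrong one: $\Delta_{s^2}\cap s^2(\Delta_{s^1})=\emptyset$ holds vacuously, since $\alpha\in\Delta_{s^2}$ forces $s^2\alpha\in\Delta_-^s$ while $\alpha\in s^2(\Delta_{s^1})$ forces $s^2\alpha\in\Delta_{s^1}\subset\Delta_+^s$; checking it proves nothing. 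The condition actually equivalent to $l(s)=l(s^1)+l(s^2)$ is $\Delta_{s^1}\cap\Delta_{s^2}=\emptyset$, i.e.\ no positive root is inverted by both involutions, and the paper establishes this not from (\ref{cond2}) and orthogonality in the abstract but from the geometry of the planes $\h_{i_k}$: the roots of $\Delta_{s^1}\cap{\Delta}_{i_k}$ and $\Delta_{s^2}\cap{\Delta}_{i_k}$ project into two disjoint open sectors cut out by the reflection axes of $s^1$ and $s^2$ in $\h_{i_k}$ (with the iterative modification of $s=s^1s^2$ made before (\ref{cond2}) guaranteeing that every root of $\Delta_{s^1}$ does lie over a two--dimensional $\h_{i_k}$).

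Second, and more seriously, your plan inverts the order of operations in a way that loses exactly the information needed for the last three properties, which are the whole point of the refinement over Proposition 5.1 of \cite{S10}. The paper does not start from the block ordering and establish $s\alpha>\alpha$ and $s\alpha+\beta>\alpha+\gamma$ afterwards; it first builds an auxiliary normal ordering by induction along the Levi filtration $\overline{\Delta}_{i_0}\subset\cdots\subset\overline{\Delta}_{i_M}=\Delta$, arranging each $({\Delta}_{i_k})_+$ as an initial segment subdivided into the additively closed families ${\Delta}_{i_k}^{r}$ (roots whose projections onto $\h_{i_k}$ lie on a common ray), with the families ordered by the angular position of their rays. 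Only this angular arrangement turns ``$s$ rotates $\h_{i_k}$ clockwise, hence moves roots forward'' into an actual proof of $s\alpha>\alpha$ and of the comparison $s\alpha+\beta>\alpha+\gamma$ for $\beta,\gamma$ in lower blocks; neither follows from (\ref{cond}) or from the circular ordering as such. One must then verify, by a case analysis (according to whether $\alpha$ lies in ${\Delta}^1_{i_k}$, $s\alpha$ in ${\Delta}^2_{i_k}$, or neither), that these inequalities survive the elementary transpositions that push $\Delta_{s^1}$ to the front and $\Delta_{s^2}$ to the back to produce (\ref{NO}); this uses that those transpositions preserve the mutual positions of the families ${\Delta}_{i_k}^{r}$. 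Your final paragraph asserts the conclusion of this construction without supplying the construction, and the proposed ``induction on $k$ feeding in the iterative refinement of $s=s^1s^2$'' cannot substitute for it: that iteration only removes the one--dimensional $(-1)$--eigenlines of $s^1$ before any ordering is chosen and has no bearing on the ordering within blocks.
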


\begin{proof}
The proof is a refinement of the proof of Proposition 5.1 in \cite{S10} where a normal ordering associated to $s$ and satisfying all properties listed in this proposition except for the last three is constructed.

We describe first the set $({\Delta}_{i_k})_+={\Delta}_{i_k}\bigcap \Delta_+^s$. Suppose that the corresponding $s$--invariant subspace $\h_{i_k}$ is a two--dimensional plane. The case when $\h_{i_k}$ is an invariant line on which $s^2$ acts by reflection and $s^1$ acts trivially can be treated in a similar way. The plane $\h_{i_k}$ is shown at Figure 1.

$$
\xy/r10pc/: ="A",-(1,0)="B", "A",+(1,0)="C","A",-(0,1)="D","A",+(0.1,0.57)*{h_{i_k}},"A", {\ar+(0,+0.5)},"A", {\ar@{-}+(0,+1)}, "A";"B"**@{-},"A";"C"**@{-},"A";"D"**@{-},"A", {\ar+(0.6,0.13)},"A",+(0.65,0.18)*{v^2_k},"A",+(0.87,0.18)*{{\Delta}_{i_k}^{2}},"A", {\ar@{-}+(0.9,0.41)}, "A",+(0.32,0)="D", +(-0.038,0.134)="E","D";"E" **\crv{(1.33,0.07)},"A",+(0.45,0.15)*{\psi_k},"A",+(0.49,0.05)*{\psi_k},"A", {\ar+(-0.6,0.23)},"A",+(-0.65,0.28)*{v^1_k},"A",+(-0.87,0.28)*{{\Delta}_{i_k}^{1}},"A", {\ar@{-}+(-0.85,0.72)}, "A",+(-0.32,0)="F", +(0.066,0.21)="G","F";"G" **\crv{(0.67,0.07)},"A",+(-0.45,0.27)*{\varphi_k},"A",+(-0.49,0.08)*{\varphi_k},"A", {\ar@{--}+(0.4,0.9)},"A",+(0.87,0.69)*{s^2{\Delta}_{i_k}^{1}}
\endxy
$$
\begin{center}
 Fig.1
\end{center}

The vector $h_{i_k}$ is directed upwards at the picture. By (\ref{cond}) and  (\ref{dech}) a root $\alpha\in {\Delta}_{i_k}$ belongs to the set $({\Delta}_{i_k})_+$ if and only if $h_{i_k}(\alpha)>0$. Identifying $\h_\mathbb{R}$ and $\h_\mathbb{R}^*$ with the help of the Killing form one can deduce that $\alpha\in {\Delta}_{i_k}$ is in $\Delta_+^s$ iff its orthogonal, with respect to the Killing form, projection onto $\h_{i_k}$ is contained in the upper--half plane shown at Figure 1.

The element $s$ acts on $\h_{i_k}$ by clockwise rotation with the angle $\theta_{i_k}=2(\varphi_k+\psi_k)$. Therefore the set $\Delta_s\bigcap {\Delta}_{i_k}$ consists of the roots the orthogonal projections of which onto $\h_{i_k}$ belong to the union of the sectors labeled $s^2{\Delta}_{i_k}^{1}$ and ${\Delta}_{i_k}^{2}$ at Figure 1.

Define other $s$--invariant subsets of roots $\overline{\Delta}_{i_k}$, $k=0,\ldots, M$,
\begin{equation}\label{dik}
\overline{\Delta}_{i_k}=\bigcup_{i_j\leq i_k}{\Delta}_{i_j}.
\end{equation}
According to this definition we have a chain of strict inclusions
\begin{equation}\label{inc}
\overline{\Delta}_{i_M}\supset\overline{\Delta}_{i_{M-1}}\supset\ldots\supset\overline{\Delta}_{i_0},
\end{equation}
such that $\overline{\Delta}_{i_M}=\Delta$, $\overline{\Delta}_{0}=\Delta_0=\{\alpha \in \Delta: s\alpha=\alpha\}$ is the set of roots fixed by the action of $s$, and $\overline{\Delta}_{i_k}\setminus \overline{\Delta}_{i_{k-1}}={\Delta}_{i_k}$.

Let ${\Delta}_{i_k}^r$ be the subset of roots in $({\Delta}_{i_k})_+$ orthogonal projections of which onto $\h_{i_k}$ are directed along a ray $r\subset \h_{i_k}$ starting at the origin. We call ${\Delta}_{i_k}^r$ the family corresponding to the ray $r$.

\begin{lemma}\label{lem}
Each ${\Delta}_{i_k}^r$ is an additively closed set of roots.

Let ${\Delta}_{i_k}^{r_1}$ and  ${\Delta}_{i_k}^{r_2}$ be two families corresponding to rays $r_1$ and $r_2$, and $\delta_1\in {\Delta}_{i_k}^{r_1}$, $\delta_2 \in {\Delta}_{i_k}^{r_2}$ two roots such that $\delta_1+\delta_2 =\delta_3\in \Delta$. Then $\delta_3 \in {\Delta}_{i_k}^{r_3}$, where ${\Delta}_{i_k}^{r_3}$ is the family corresponding to a ray $r_3$ such that $r_3$ lies inside of the angle formed by $r_1$ and $r_2$.
\end{lemma}

\begin{proof}
All statements are simple consequences of the fact that the sum of the orthogonal projections of any two roots onto $\h_{i_k}$ is equal to the orthogonal projection of the sum.

In the first case the orthogonal projections of any two roots $\alpha, \beta$ from ${\Delta}_{i_k}^r$ onto $\h_{i_k}$ have the same direction therefore the orthogonal projection of the sum $\alpha+\beta$ onto $\h_{i_k}$ has the same direction as the orthogonal projections of $\alpha$ and $\beta$, and hence $\alpha+\beta \in {\Delta}_{i_k}^r$.

In the second case it suffices to observe that the sum of the orthogonal projections of $\delta_1$ and $\delta_2$ onto $\h_{i_k}$ is equal to the orthogonal projection of the sum, and the sum of the orthogonal projections of $\delta_1$ and $\delta_2$ onto $\h_{i_k}$ lies inside of the angle formed by $r_1$ and $r_2$.

\end{proof}

Now we construct an axillary normal ordering on $\Delta_+^s$ by induction starting from the set $({\Delta}_{i_0})_+$ as follows.

If $i_0=0$ or $\h_{i_0}$ is one--dimensional then we fix an arbitrary normal order on $({\Delta}_{i_0})_+$.

If $\h_{i_0}$ is two--dimensional then we choose a normal ordering in $({\Delta}_{i_0})_+$ in the following way.
First fix an initial arbitrary normal ordering on $({\Delta}_{i_0})_+$. Since by Lemma \ref{lem} each set ${\Delta}_{i_0}^{r}$ is additively closed we obtain an induced ordering for ${\Delta}_{i_0}^{r}$ which satisfies the defining property for the normal ordering.

Now using these induced orderings on the sets ${\Delta}_{i_0}^{r}$ we define an axillary normal ordering on $({\Delta}_{i_0})_+$ such that on the sets ${\Delta}_{i_0}^{r}$ it coincides with the induced normal ordering defined above, and
if ${\Delta}_{i_0}^{r_1}$ and  ${\Delta}_{i_0}^{r_2}$ are two families corresponding to rays $r_1$ and $r_2$ such that $r_2$ lies on the right from $r_1$ in $\h_{i_0}$ then for any $\alpha\in {\Delta}_{i_0}^{r_1}$ and  $\beta \in {\Delta}_{i_0}^{r_2}$ one has $\alpha<\beta$. By Lemma \ref{lem} the two conditions imposed on the axillary normal ordering in $({\Delta}_{i_0})_+$ are compatible and define it in a unique way for the given initial normal ordering on $({\Delta}_{i_0})_+$. Since $s$ acts by a clockwise rotation on $\h_{i_0}$ we have $s({\Delta}_{i_0}^{r})={\Delta}_{i_0}^{s(r)}$ for $s(r)$ in the upper--half plane, and hence the new normal ordering satisfies the condition that for any $\alpha\in ({\Delta}_{i_0})_+$ such that $s\alpha \in ({\Delta}_{i_0})_+$ one has $s\alpha>\alpha$.

Now assume that an axillary normal ordering has already been constructed for the set $\overline{\Delta}_{i_{k-1}}$ and define it for the set $\overline{\Delta}_{i_k}$. Observe that by construction $\overline{\Delta}_{i_{k-1}}\subset \overline{\Delta}_{i_k}$ is the root system of a Levi subalgebra $\g_{i_{k-1}}$ inside of the Levi subalgebra $\g_{i_k}$ of $\g$ with the root system $\overline{\Delta}_{i_k}$. Indeed, the Levi subalgebras $\g_{i_k}$ can be defined inductively starting from $\g_{i_M}=\g$ so that $\g_{i_{k-1}}$ is the centralizer of $h_{i_k}$ in $\g_{i_k}$.  In particular, $\overline{\Delta}_{i_{k-1}}$ is generated by some subset of simple roots of the set of simple roots of $(\overline{\Delta}_{i_{k}})_+$. Therefore there exists an initial normal ordering on $(\overline{\Delta}_{i_{k}})_+$ in which the roots from the set $(\overline{\Delta}_{i_k})_+\setminus (\overline{\Delta}_{i_{k-1}})_+=({\Delta}_{i_k})_+$ form an initial segment and the remaining roots from $(\overline{\Delta}_{i_{k-1}})_+$ are ordered according to the previously defined axillary normal ordering. As in case of the induction base this initial normal ordering gives rise to an induced ordering on each set ${\Delta}_{i_k}^r$.

Now using these induced orderings on the sets ${\Delta}_{i_k}^{r}$ we define an axillary normal ordering on $(\overline{\Delta}_{i_{k}})_+$. We impose the following conditions on it. Firstly we require that the roots from the set $({\Delta}_{i_k})_+$ form an initial segment and the remaining roots from $(\overline{\Delta}_{i_{k-1}})_+$ are ordered according to the previously defined axillary normal ordering.
Secondly, on the sets ${\Delta}_{i_k}^{r}$ the axillary normal ordering coincides with the induced normal ordering defined above, and if ${\Delta}_{i_k}^{r_1}$ and  ${\Delta}_{i_k}^{r_2}$ are two families corresponding to rays $r_1$ and $r_2$ such that $r_2$ lies on the right from $r_1$ in $\h_{i_k}$ then for any $\alpha\in {\Delta}_{i_k}^{r_1}$ and  $\beta \in {\Delta}_{i_k}^{r_2}$ one has $\alpha<\beta$. By Lemma \ref{lem} the conditions imposed on the axillary normal ordering in $({\Delta}_{i_k})_+$ are compatible and define it in a unique way. Since $s$ acts by a clockwise rotation on $\h_{i_k}$ we have $s({\Delta}_{i_k}^{r})={\Delta}_{i_k}^{s(r)}$ for $s(r)$ in the upper--half plane. Note also that the roots from $\overline{\Delta}_{i_{k-1}}$ have zero orthogonal projections onto $\h_{i_k}$. Therefore the new normal ordering satisfies the following property.

\begin{lemma}\label{lem0}
For any $\alpha\in ({\Delta}_{i_k})_+$ such that $s\alpha \in ({\Delta}_{i_k})_+$ one has $s\alpha>\alpha$ and if $\beta, \gamma\in \overline{\Delta}_{i_{k-1}}$, $s\alpha+\beta, \alpha+\gamma\in \Delta$ then $s\alpha+\beta,\alpha+\gamma\in \Delta_+^s$ and $s\alpha+\beta>\alpha+\gamma$.
\end{lemma}

Now we proceed by induction and obtain an axillary normal ordering on $\Delta_+^s$.

Observe that, according to the definition of the axillary normal ordering of $\Delta_+^s$ constructed above we have the following properties of this normal ordering.
\begin{lemma}\label{lem1}
For any $\alpha\in {\Delta}_{i_{k}}$ and $\beta\in {\Delta}_{i_{k+1}}$ we have $\alpha>\beta$, and if ${\Delta}_{i_k}^{r_1}$ and  ${\Delta}_{i_k}^{r_2}$ are two families corresponding to rays $r_1$ and $r_2$ such that $r_2$ lies on the right from $r_1$ in $\h_{i_k}$ then for any $\alpha\in {\Delta}_{i_k}^{r_1}$ and  $\beta \in {\Delta}_{i_k}^{r_2}$ one has $\alpha<\beta$. Moreover, the roots from the sets ${\Delta}_{i_k}^{r}$ form minimal segments, and the roots from the set $(\Delta_0)_+$ form a final minimal segment.
\end{lemma}

The involutions $s^1$ and $s^2$ act in $\h_{i_k}$ as reflections with respect to the lines orthogonal to the vectors labeled by $v^1_k$ and $v^2_k$, respectively, at Figure 1, the angle between $v^1_k$ and $v^2_k$ being equal to $\pi-(\varphi_{i_k}+\psi_{i_k})$. The nonzero projections of the roots from the set $\{\gamma_1, \ldots \gamma_n\}\bigcap {\Delta}_{i_k}$ onto the plane $\h_{i_k}$ have the same (or the opposite) direction as the vector $v^1_k$, and the nonzero projections of the roots from the set $\{\gamma_{n+1}, \ldots , \gamma_{l'}\}\bigcap {\Delta}_{i_k}$ onto the plane $\h_{i_k}$ have the same (or the opposite) direction as the vector $v^2_k$.

For each of the involutions $s^1$ and $s^2$ we obviously have decompositions $\Delta_{s^{1,2}}=\bigcup_{k=0}^M{{\Delta}_{i_k}^{1,2}}$, $\Delta_{s}=\bigcup_{k=0}^M{{\Delta}_{i_k}^{s}}$, where ${{\Delta}_{i_k}^{1,2}}={{\Delta}_{i_k}}\bigcap \Delta_{s^{1,2}}$, $\Delta_{s^{1,2}}=\{\alpha \in \Delta_+^s:s^{1,2}\alpha \in -\Delta_+^s\}$, ${{\Delta}_{i_k}^{s}}={{\Delta}_{i_k}}\bigcap \Delta_{s}$, ${{\Delta}_{i_k}^{s}}={{\Delta}_{i_k}^{2}}\bigcup s^2{{\Delta}_{i_k}^{1}}$. In the plane $\h_{i_k}$, the elements from the sets ${{\Delta}_{i_k}^{1,2}}$ are projected onto the interiors of the sectors labeled by ${{\Delta}_{i_k}^{1,2}}$ and the elements from the set ${{\Delta}_{i_k}^{s}}$ are projected onto the interior of the union of the sectors labeled by ${{\Delta}_{i_k}^{2}}$ and $s^2{{\Delta}_{i_k}^{1}}$. Therefore the sets ${{\Delta}_{i_k}^{1}}$ and ${{\Delta}_{i_k}^{2}}$ have empty intersection and are the unions of the sets ${\Delta}_{i_k}^{r}$ with $r$ belonging to the sector ${{\Delta}_{i_k}^{1,2}}$, and the sets ${{\Delta}_{i_k}^{s}}$ have empty intersection and are the unions of the sets ${\Delta}_{i_k}^{r}$ with $r$ belonging to the union of the sectors labeled by ${{\Delta}_{i_k}^{2}}$ and $s^2{{\Delta}_{i_k}^{1}}$.

For any root $\alpha\in \Delta_{s^{1}}$ one obviously has $\alpha \in {{\Delta}_{i_k}^{1}}$, where $\h_{i_k}$ is a two--dimensioinal plane, as by the assumption imposed  before (\ref{cond2}) there are no one--dimensional subspaces $\h_{i_k}$ on which $s^1$ acts by multiplication by $-1$.
Thus in case if $\h_{i_k}$ is an invariant line on which $s$ acts by multiplication by $-1$ the set ${{\Delta}_{i_k}^{1}}$ is empty and hence ${{\Delta}_{i_k}^{s}}={{\Delta}_{i_k}^{2}}$. This set is the set ${\Delta}_{i_k}^{r}=({\Delta}_{i_k})_+$, where $r$ is the positive semi-axis in $\h_{i_k}$. From the last two observations we deduce that the sets $\Delta_{s^{1}}$ and $\Delta_{s^{2}}$ have always empty intersection. In particular, by the results of \S 3 in \cite{Z1} the decomposition $s=s^1s^2$ is reduced in the sense that $l(s)=l(s^2)+l(s^1)$, and $\Delta_{s}=\Delta_{s^{2}}\bigcup s^2(\Delta_{s^{1}})$ (disjoint union), where $\Delta_{s}=\{\alpha \in \Delta_+^s:s\alpha \in -\Delta_+^s\}$.

\begin{lemma}\label{lem2}
Assume that $\Delta_+^s$ is equipped with an arbitrary normal ordering such that the roots from the set ${\Delta}_{i_k}^r=\{\delta_1,\ldots, \delta_a\}$ form a minimal segment $\delta_1,\ldots, \delta_a$, for some $1\leq p<k$ the roots from the set ${\Delta}_{i_p}^t=\{\xi_1,\ldots, \xi_b\}$ form a minimal segment $\xi_1,\ldots, \xi_b$ and the segment $\delta_1,\ldots, \delta_a,\xi_1,\ldots, \xi_b$ is also minimal. Then applying elementary transposition one can reduce the last segment to the form $\xi_{i_1},\ldots, \xi_{i_b},\delta_{j_1},\ldots, \delta_{j_a}$.
\end{lemma}

\begin{proof}
The proof is by induction. First consider the minimal segment $\delta_1,\ldots, \delta_a,\xi_1$.

Since the orthogonal projection of the roots from the set ${\Delta}_{i_p}$ onto $\h_{i_k}$ are equal to zero, for any $\alpha\in {\Delta}_{i_p}^t$ and $\beta \in {\Delta}_{i_k}^r$ such that $\alpha+\beta\in \Delta$ we have $\alpha+\beta\in {\Delta}_{i_k}^r$. Assume now that $\alpha$ and $\beta$ are contained in an ordered segment of form (\ref{rank2}) or in a segment with the inverse ordering. By the above observation this segment contains no other roots from ${\Delta}_{i_p}^t$, and $\alpha$ is the first or the last element in that segment. For the same reason the other roots in that segment must also belong to ${\Delta}_{i_k}^r$. Therefore applying an elementary transposition, if necessarily, one can move $\alpha$ to the first position in that segment.

Applying this procedure iteratively to the segment $\delta_1,\ldots, \delta_a,\xi_1$ we can reduce it to the form $\xi_1,\delta_{k_1},\ldots, \delta_{k_a}$.

Now we can apply the same procedure to the segment $\delta_{k_1},\ldots, \delta_{k_a}\xi_2$ to reduce the segment $\xi_1,\delta_{k_1},\ldots, \delta_{k_a}\xi_2$ to the form $\xi_1\xi_2\delta_{l_1},\ldots, \delta_{l_a}$.

Iterating this procedure we obtain the statement of the lemma.

\end{proof}

Now observe that according to Lemma \ref{lem1} the roots from each of the sets $({\Delta}_{i_{k}})_+$ form a minimal segment in the axillary normal ordering of $\Delta_+^s$, and the roots from the sets ${\Delta}_{i_k}^{r}$ form minimal segments inside $({\Delta}_{i_{k}})_+$. As we observed above the sets ${{\Delta}_{i_k}^{1,2}}$ are the unions of the sets ${\Delta}_{i_k}^{r}$ with $r$ belonging to the sectors ${{\Delta}_{i_k}^{1,2}}$ and hence by Lemma \ref{lem1} the roots from the sets ${{\Delta}_{i_k}^{1,2}}$ form an initial and a final segment  inside $({\Delta}_{i_{k}})_+$.

Therefore we can apply Lemmas \ref{lem1} and \ref{lem2} to move all roots from the segments ${{\Delta}_{i_k}^{1}}$ to the left and to move all roots from the segments ${{\Delta}_{i_k}^{2}}$ to the right to positions preceding the final segment formed by the roots from $(\Delta_0)_+$.

Now using similar arguments the roots from the sets $s^2{{\Delta}_{i_k}^{1}}$ forming minimal segments  by Lemma \ref{lem1} as well can be moved to the right to positions preceding the final segment formed by the roots from the set $\Delta_{s^{2}}\bigcup(\Delta_0)_+$.

Note that according to the algorithm given in Lemma \ref{lem2} for each fixed $k$ the mutual positions of the minimal segments formed by the roots from the sets ${\Delta}_{i_k}^{r}$ are preserved by the transpositions used in that lemma. Therefore the new normal ordering obtained this way still satisfies the second property mentioned in Lemma \ref{lem1}, i.e. if ${\Delta}_{i_k}^{r_1}$ and  ${\Delta}_{i_k}^{r_2}$ are two families corresponding to rays $r_1$ and $r_2$ such that $r_2$ lies on the right from $r_1$ in $\h_{i_k}$ then for any $\alpha\in {\Delta}_{i_k}^{r_1}$ and  $\beta \in {\Delta}_{i_k}^{r_2}$ one has $\alpha<\beta$.

Now we can apply elementary transpositions used in the proof Proposition 5.1 in \cite{S10} to bring the initial segment formed by the roots from $\Delta_{s^{1}}$ and the segment formed by the roots from $\Delta_{s^{2}}$ and preceding the final segment $(\Delta_0)_+$ to the form described in (\ref{NO}). These elementary transpositions do not affect positions of other roots. We claim that for $\alpha\in {\Delta}_{i_k}^{r}$ we still have $s\alpha>\alpha$ if $s\alpha\in \Delta_+^s$.

Indeed, if $\alpha \in ({\Delta}_{i_k})_+$, $\alpha \not \in {{\Delta}_{i_k}^{1}}$, $s\alpha \not\in {{\Delta}_{i_k}^{2}}$, $s\alpha\in ({{\Delta}_{i_k}})_+$ this follows from the second property mentioned in Lemma \ref{lem1}.

If $\alpha \in {{\Delta}_{i_k}^{1}}$ and $s\alpha\in \Delta_+^s$ then $s\alpha \not \in \Delta_{s^1}$ as $s^1(s\Delta_{s^1})=s^2(\Delta_{s^1})\subset \Delta_+^s$ since the decomposition $s=s^1s^2$ is reduced. Therefore $s\alpha>\alpha$ as the roots from the set $\Delta_{s^1}$ form an initial segment in the normal ordering of $\Delta_+^s$.

If $\alpha \in ({\Delta}_{i_k})_+$, $\alpha \not \in {{\Delta}_{i_k}^{1}}$, $s\alpha \in {{\Delta}_{i_k}^{2}}$ then $\alpha\not \in \Delta_s$ and $s\alpha\in \Delta_s$ as ${{\Delta}_{i_k}^{2}}\subset \Delta_s$. Therefore $s\alpha>\alpha$ as the roots from the set $\Delta_s \bigcup (\Delta_0)_+$ form a final segment in the normal ordering of $\Delta_+^s$ and $\alpha$ does not belong to that segment.

Finally if $\alpha\in {{\Delta}_{i_k}^{2}}$ then $s\alpha \in -\Delta_+^s$.

Moreover, similar arguments together with the fact that all roots from $\overline{\Delta}_{i_{k-1}}$ have zero orthogonal projections onto $\h_{i_k}$ show that the new normal ordering still satisfies the property of Lemma \ref{lem0}. Note that for one--dimensional $\h_{i_k}$ this property is void.

By construction the roots from the set $\Delta_s$ form a minimal segment in $\Delta_+^s$ of the form $\gamma, \ldots , \beta_{l(s^2)}^2$ which contains $\Delta_{s^2}$. This completes the proof of the proposition.

\end{proof}

We call normal ordering (\ref{NO}) a normal ordering associated to $s$.

Later we shall use the circular normal ordering of $\Delta$ corresponding to the system of positive roots $\Delta_+^s$ and to its normal ordering introduced in Proposition \ref{pord}.

Let
\begin{equation*}
{\Delta}_0=\{\alpha\in \Delta|s(\alpha)=\alpha\},
\end{equation*}
and $\Gamma$ the set of simple roots in $\Delta_+^s$.
We shall need the parabolic subalgebra $\p$ of $\g$ and the parabolic subgroup $P$ associated to the subset $\Gamma_0=\Gamma\bigcap {\Delta}_{0}$ of simple roots. Let $\n$ and $\l$ be the nilradical and the Levi factor of $\p$, $N$ and $L$ the unipotent radical and the Levi factor of $P$, respectively.

Note that we have natural inclusions of Lie algebras $\p\supset\n$, and ${\Delta}_{0}$ is the root system of the reductive Lie algebra $\l$. We also denote by $\opn$ the nilpotent subalgebra opposite to $\n$ and by $\overline{N}$ the subgroup in $G$ corresponding to $\opn$.
The linear subspace of $\g$ generated by the root vectors $X_{\alpha}$ ($X_{-\alpha}$), $\alpha\in \Delta_{\m_+}$ is in fact a Lie subalgebra ${\m_+}\subset \g$ (${\m_-}\subset \g$).
Note that by definition $\Delta_{\m_+} \subset \Delta_+^s$, and hence ${\m_\pm}\subset \b_\pm^s$, where $\b_+^s$ is the Borel subalgebra associated to $\Delta_+^s$ and $\b_-^s$ is the opposite Borel subalgebra. Let $\n_\pm^s$ be the nilradicals of $\b_\pm^s$. Denote by $B_\pm^s$ the corresponding Borel subgroups and by $N_\pm^s$ their unipotent radicals. Let also $M_\pm\subset G$ be the subgroups corresponding to the Lie subalgebras $\m_\pm$.

Introduce the element $\bar{h}_0=\sum_{k=1}^{M}h_{i_k}\in \h_\mathbb{R}$. Let $\h_0^*\subset \h_{\mathbb{R}}^*$ be the image of $\h_0$ in $\h_{\mathbb{R}}^*$ under the isomorphism $\h_{\mathbb{R}}^*\simeq \h_{\mathbb{R}}$ induced by the Killing form. By the definition of $\Delta_+^s$ for any $x\in \h_0^*$ one has $\bar{h}_0(x)=0$ and a root $\alpha\in \Delta\setminus \Delta_0$ is positive iff $\bar{h}_0(\alpha)>0$.

Denote a representative for the Weyl group element $s$ in $G$ by the same letter.
Let $Z$ be the subgroup of $G$ generated by the semisimple part of the Levi subgroup $L$ and by the centralizer of $s$ in $H$. The level surface $\mathcal{C}$ is the variety $NZ{s^{-1}}N$.

The following proposition is a modification of Propositions 2.1 and 2.2 in \cite{S6}.
\begin{proposition}\label{prop2}
Let $N_s=\{ v \in N|svs^{-1}\in \overline{N} \}$. Then
the conjugation map
\begin{equation}\label{cross}
N\times N_sZ{s^{-1}}\rightarrow NZ{s^{-1}}N
\end{equation}
is an isomorphism of varieties. Moreover, the variety $N_sZ{s^{-1}}$ is a transversal slice to the set of conjugacy classes in $G$.
\end{proposition}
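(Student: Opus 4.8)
The plan is to follow the proof of Propositions 2.1 and 2.2 in \cite{S6}, adapting every step to the refined system of positive roots $\Delta_+^s$ and to the normal ordering of Proposition \ref{pord}. The statement splits into two assertions: that conjugation (\ref{cross}) is an isomorphism of varieties, and that $N_sZs^{-1}$ is transversal to conjugacy classes. Write $\Delta(\n)=\Delta_+^s\setminus\Delta_0$ for the root set of $\n$. The backbone of the first assertion is the variety decomposition $N\cong N_s\times N^s$, where $N^s=\{v\in N\mid svs^{-1}\in N\}$ has root set $\Delta_{\n^s}=\{\alpha\in\Delta(\n)\mid s\alpha\in\Delta_+^s\}$. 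First I would record that $\Delta(\n)$ is the disjoint union of $\Delta_s$ and $\Delta_{\n^s}$ and that both are additively closed: for $\Delta_{\n^s}$ this uses the fact (recalled before the statement) that $\bar h_0(\alpha)>0$ for every $\alpha\in\Delta(\n)$, so a sum of two such roots cannot fall into $\Delta_0$. Hence $N_s$ and $N^s$ are closed subgroups whose product map onto $N$ is an isomorphism of varieties.

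For the isomorphism I would prove bijectivity of $(n,x)\mapsto nxn^{-1}$ directly. Given $x=vzs^{-1}$, conjugation produces $nvz(s^{-1}n^{-1}s)s^{-1}$; using that $Z$ normalises $N$, $\overline{N}$ and $s^{-1}Ns$, together with the decomposition $s^{-1}Ns=(s^{-1}Ns\cap N)(s^{-1}Ns\cap\overline{N})$, one rewrites a conjugate $n^{-1}gn$ of an arbitrary $g=n_1zs^{-1}n_2\in\mathcal C$ again in the form (left $N$)$\,z\,s^{-1}$(right $N$). Landing in the slice then amounts to two conditions on $n$: that the right $N$-factor be trivial and that the left $N$-factor lie in $N_s$. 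The first condition confines $n$ to a coset of $N^{s^{-1}}=\{v\in N\mid s^{-1}vs\in N\}$, and the equality $\dim N^{s^{-1}}=\dim N^s$ (the bijection $\alpha\mapsto s\alpha$ identifies their root sets) makes the second condition cut this coset down to a single point. Surjectivity and injectivity then follow simultaneously from solvability and uniqueness of this system. I expect this bookkeeping to be the main obstacle: controlling how the right-hand factor is absorbed across $s^{-1}$ requires the precise structure of $\Delta_s$, namely the reducedness of $s=s^1s^2$ and the splitting $\Delta_s=\Delta_{s^2}\cup s^2(\Delta_{s^1})$ furnished by Proposition \ref{pord}, together with its last three ordering properties.

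To upgrade the bijection to an isomorphism of varieties I would observe that the solution $(n,v,z)$ constructed above depends regularly on $g\in\mathcal C$: each step (the product decompositions and the passages across $s^{-1}$ and across $z$) is given by regular maps on unipotent and reductive groups. This defines a morphism $\mathcal C\to N\times N_sZs^{-1}$ inverse to (\ref{cross}), so that (\ref{cross}) is an isomorphism of varieties; no appeal to normality is then needed, although Zariski's main theorem would furnish an alternative once bijectivity and the smoothness of the source are in hand.

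For the second assertion I would show that $\phi\colon G\times N_sZs^{-1}\to G$, $(g,x)\mapsto gxg^{-1}$, is a submersion at every point $(e,x)$ with $x\in N_sZs^{-1}$. After right translation by $x^{-1}$ this becomes the linear statement $(\mathrm{id}-\mathrm{Ad}(x))\g+T_x(N_sZs^{-1})x^{-1}=\g$, where $T_x(N_sZs^{-1})x^{-1}$ contains $\n_s$ and a twist of $\z$. Since the unipotent factor $v$ and the reductive factor $z$ of $x=vzs^{-1}$ perturb $\mathrm{Ad}(s^{-1})$ only by nilpotent and block-diagonal terms, surjectivity reduces to a weight-space computation for $\mathrm{id}-\mathrm{Ad}(s^{-1})$ on a complement of $\n_s+\z$, which is governed by the same root combinatorics as above. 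By $G$-equivariance $\phi$ is then a submersion everywhere, so $N_sZs^{-1}$ meets each conjugacy class in its image transversally; and the dimension count $\dim\mathcal C=\dim N+\dim N_s+\dim Z$ delivered by the first part shows that the slice has complementary dimension, which completes the proof.
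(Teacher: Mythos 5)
The paper itself offers no proof of this proposition: it is imported as ``a modification of Propositions 2.1 and 2.2 in \cite{S6}'', so there is no in-text argument to measure yours against. Your plan follows the same route as that reference, and the combinatorial ingredients you assemble are the correct ones: the partition of the roots of $\n$ into $\Delta_s$ and the root set of $N^s$, their additive closedness via $\bar{h}_0$, the resulting variety decomposition $N\cong N_s\times N^s$, the factorization of $s^{-1}Ns$ across $N$ and $\overline{N}$, and the equality $\dim N^{s^{-1}}=\dim N^{s}$.

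The genuine gap is that at the two places where the proposition has actual content you substitute a dimension count for an argument. For bijectivity you reduce to a ``system'' of conditions on $n$ and conclude from $\dim N^{s^{-1}}=\dim N^{s}$ that it is uniquely solvable; but a square polynomial system need not have a unique solution, and nothing in your write-up says why this one does. What makes it work is the triangular structure: ordering the root subgroups of $N$ by the normal ordering of Proposition \ref{pord} (equivalently, inducting along the lower central series of $N$), each successive coordinate of $n$ is determined by an equation that is affine-linear in that coordinate with nonvanishing leading coefficient, the nonlinear terms involving only coordinates already determined --- this is precisely the mechanism that formula (\ref{tind}) later makes explicit, and it is also what yields regularity of the inverse morphism without further work. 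The same criticism applies to transversality: saying that surjectivity ``reduces to a weight-space computation for $\mathrm{id}-\mathrm{Ad}(s^{-1})$ governed by the same root combinatorics'' names the problem rather than solving it; one must actually verify that the image of $\mathrm{id}-\mathrm{Ad}(s^{-1})$ together with $\n_s+\z$ is all of $\g$ (on $\h$ this is invertibility of $1-s$ on $\h'$, on the root spaces it depends on the orbit structure of $s$ on $\Delta\setminus\Delta_0$), and the passage from $\mathrm{Ad}(s^{-1})$ to $\mathrm{Ad}(vzs^{-1})$ requires a filtration argument, not just the word ``perturb''. Finally, the closing appeal to ``complementary dimension'' should be dropped: transversality here is exactly the surjectivity of the linearized map, and the conjugacy classes meeting the slice do not all have the same dimension.
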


The variety $\Sigma=N_sZ{s^{-1}}\simeq NZ{s^{-1}}N/N$ is a subvariety in $G$. It is an analogue of the Slodowy slices in algebraic group theory.

The operator $\Pi$ will be defined in terms of certain functions on $NZ{s^{-1}}N$.
These functions are related to some natural coordinates on Bruhat cells in $G$. First, as a warm up exercise, we obtain explicit formulas for these coordinates.

If $w\in W$ is any Weyl group element we shall denote a representative of $w$ in $G$ by the same letter. Let $\Delta_+$ be any system of positive roots, $\Delta_-=-\Delta_+$, $B_\pm$ and $N_\pm$ the corresponding Borel subgroups in $G$ and their unipotent radicals, $\b_\pm$ and $\n_\pm$ their Lie algebras, respectively. Denote $N_{w^{-1}}=\{n\in N_+:w^{-1}nw\in N_-\}$. $N_{w^{-1}}$ is a subgroup in $N_+$ generated by one parametric subgroups corresponding to the roots from the set $\Delta_{w^{-1}}=\{\alpha \in \Delta_+:w^{-1}\alpha\in \Delta_-\}$. Denote by $s_i$ the reflection with respect to a simple root $\alpha_i\in \Delta_+$, $i=1,\ldots ,l$ and let $w=s_{i_1}\ldots s_{i_k}$ be a reduced decomposition of $w$. Then $\Delta_{w^{-1}}=\{\beta_1, \ldots \beta_k\}$, and $\beta_j=s_{i_1}\ldots s_{i_{j-1}}\alpha_j$. Note that the elements $w_j=s_{i_1}\ldots s_{i_{j}}$ can also be represented in the form $w_j=s_{\beta_j}\ldots s_{\beta_1}$, where for any root $\alpha \in \Delta$ we denote by $s_\alpha$ the reflection with respect to $\alpha$. Observe also that $\Delta_{w_j^{-1}}=\{\beta_1, \ldots, \beta_j\}=\{\alpha \in \Delta_+:w_j^{-1}\alpha\in \Delta_-\}$.

Let $X_\alpha(t)=\exp(tX_\alpha)$, $t\in \mathbb{C}$ and denote by $N_\alpha$ the one parametric subgroup corresponding to root $\alpha$, so $X_\alpha(t)\in N_\alpha$. Any element of the Bruhat cell $B_+w^{-1}B_+$ can be uniquely represented in the form $nw^{-1}hn_{w^{-1}}$, $n\in N_+, h\in H$, $n_{w^{-1}}=X_{\beta_k}(q_k)\ldots X_{\beta_1}(q_1)$.

Fix a normal ordering $\beta_1,\ldots ,\beta_D$ of the system of positive roots $\Delta_+$ such that $\beta_1,\ldots , \beta_k$ is its initial segment and equip $\Delta$ with the corresponding circular normal ordering. This is always possible by the results of \S 3 in \cite{Z1}.

Let $\omega_i$, $i=1,\ldots, l$ be the fundamental weights of $\g$ corresponding to $\Delta_+$, $V_{\omega_i}$ the irreducible representation of $G$ with highest weight $\omega_i$, $v_{\omega_i}\in V_{\omega_i}$ a non--zero highest weight vector. Denote by $(\cdot, \cdot )$ the contravariant bilinear non--degenerate form on $V_{\omega_i}$ such that $(v_{\omega_i},v_{\omega_i})=1$ and  $(\omega(g)v,w)=(v,gw)$, where $\omega$ is the Chevalley anti--involution on $G$. For any reflection $s_\alpha$ one can fix a representative $s_\alpha$ in $G$ such that $\omega(s_\alpha)=s_\alpha^{-1}$. It suffices to do that for simple reflections and one can put $s_i=\exp(f_i)\exp(-e_i)\exp(f_i)$, where $e_i\in \n_+,f_i\in \n_-,h_i\in \h$ are the Chevalley generators of $\g$ on which $\omega$ acts as follows $\omega(f_i)=e_i, \omega(e_i)=f_i, \omega(h_i)=h_i$.

If $\alpha, \beta\in \Delta$ are such that the segment $[\alpha,\beta]$ is minimal then we denote by $N_{[\alpha,\beta]}$ the subgroup in $G$ generated by the one--parametric subgroups corresponding to the roots from $[\alpha,\beta]$.

\begin{proposition}\label{Br}
Let $g=nw^{-1}hn_{w^{-1}}\in B_+w^{-1}B_+$, $n\in N_+, h\in H$, $n_{w^{-1}}=X_{\beta_k}(q_k)\ldots X_{\beta_p}(q_p)$, $1\leq p\leq k$ be an element of the Bruhat cell $B_+w^{-1}B_+$. Then
\begin{equation}\label{tp}
q_p=c_p\frac{(w_{p-1}v_{\omega_{i_p}},wgw_p v_{\omega_{i_p}})}{(w_{p-1}v_{\omega_{i_p}},wgw_{p-1} v_{\omega_{i_p}})},
\end{equation}
where $w_p=s_{\beta_p}\ldots s_{\beta_1}$, $w_{p-1}=s_{\beta_{p-1}}\ldots s_{\beta_1}$, $c_p$ is a non--zero constant only depending on the choice of the representative $s_{\beta_p}\in G$ and on the choice of the root vector $X_{\beta_p}$, and it is assumed that $w_0=1$.
\end{proposition}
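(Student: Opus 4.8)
The plan is to evaluate the two matrix coefficients in (\ref{tp}) directly from the Bruhat form $g=nw^{-1}hn_{w^{-1}}$, using the contravariance identity $(gu,u')=(u,\omega(g)u')$ together with $\omega(s_\alpha)=s_\alpha^{-1}$ and $\omega(h)=h$. First I would move the left Weyl element across the form: since $\omega(w)=w^{-1}$ and $w^{-1}w_{p-1}=s_{i_k}\cdots s_{i_p}=:v$, the denominator becomes $(v v_{\omega_{i_p}},g\,w_{p-1}v_{\omega_{i_p}})$ and the numerator becomes $(v v_{\omega_{i_p}},g\,w_pv_{\omega_{i_p}})$. The vector $v v_{\omega_{i_p}}$ has weight $v\omega_{i_p}=w^{-1}w_{p-1}\omega_{i_p}$, which is the lowest relevant weight; this is what will make the unipotent factor $n\in N_+$ harmless in both coefficients.

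The heart of the argument is two annihilation statements, each reduced to the reducedness of $w=s_{i_1}\cdots s_{i_k}$ via $w_{p-1}^{-1}X_{\beta_j}w_{p-1}\propto X_{w_{p-1}^{-1}\beta_j}$. For $j\ge p$ one has $w_{p-1}^{-1}\beta_j=s_{i_p}\cdots s_{i_{j-1}}\alpha_{i_j}\in\Delta_+$ (a subword of a reduced word), so $X_{\beta_j}$ kills the extremal vector $w_{p-1}v_{\omega_{i_p}}$; hence $n_{w^{-1}}w_{p-1}v_{\omega_{i_p}}=w_{p-1}v_{\omega_{i_p}}$. Then $g\,w_{p-1}v_{\omega_{i_p}}=h^{w_{p-1}\omega_{i_p}}\,n\,v v_{\omega_{i_p}}$, where $h^{w_{p-1}\omega_{i_p}}$ denotes the scalar by which $h$ acts. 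Because $n$ only raises weights while $v v_{\omega_{i_p}}$ sits at the bottom weight $v\omega_{i_p}$, pairing against $v v_{\omega_{i_p}}$ isolates the identity term, and $(v v_{\omega_{i_p}},v v_{\omega_{i_p}})=(v_{\omega_{i_p}},v_{\omega_{i_p}})=1$ by contravariance and $\omega(v)=v^{-1}$. This yields the clean denominator $(w_{p-1}v_{\omega_{i_p}},wg\,w_{p-1}v_{\omega_{i_p}})=h^{w_{p-1}\omega_{i_p}}$, in which both $n$ and the nilpotent part of $n_{w^{-1}}$ have dropped out.

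For the numerator I would use $w_p=s_{\beta_p}w_{p-1}$, so $w_pv_{\omega_{i_p}}$ is the weight-$(w_{p-1}\omega_{i_p}-\beta_p)$ vector obtained from $w_{p-1}v_{\omega_{i_p}}$ by lowering once along $\beta_p$. A rank-one ($SL_2$) computation, using $\langle w_{p-1}\omega_{i_p},\beta_p^\vee\rangle=1$, gives $X_{\beta_p}w_pv_{\omega_{i_p}}=c'\,w_{p-1}v_{\omega_{i_p}}$ and $X_{\beta_p}^2w_pv_{\omega_{i_p}}=0$, while $X_{\beta_j}w_pv_{\omega_{i_p}}=0$ for $j>p$ since $w_p^{-1}\beta_j=s_{i_{p+1}}\cdots s_{i_{j-1}}\alpha_{i_j}\in\Delta_+$. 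Hence $n_{w^{-1}}w_pv_{\omega_{i_p}}=w_pv_{\omega_{i_p}}+q_pc'\,w_{p-1}v_{\omega_{i_p}}$, with only a single linear term in $q_p$ surviving. After applying $h$ and $w^{-1}$ and pairing against $v v_{\omega_{i_p}}$, the contribution of the $w_pv_{\omega_{i_p}}$-term is $(v v_{\omega_{i_p}},n\,v'v_{\omega_{i_p}})$ with $v'=s_{i_k}\cdots s_{i_{p+1}}$, and this vanishes: the needed weight $v\omega_{i_p}=v'\omega_{i_p}-v'\alpha_{i_p}$ lies strictly below $v'\omega_{i_p}$ because $v'\alpha_{i_p}\in\Delta_+$ (equivalently $\ell(v's_{i_p})>\ell(v')$, as $s_{i_k}\cdots s_{i_p}$ is reduced), and $n$ cannot lower weights. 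The surviving $q_p$-term then gives $(w_{p-1}v_{\omega_{i_p}},wg\,w_pv_{\omega_{i_p}})=q_pc'\,h^{w_{p-1}\omega_{i_p}}$.

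Dividing, the torus factor $h^{w_{p-1}\omega_{i_p}}$ and the unipotent factor cancel, leaving $q_p=c_p\,(w_{p-1}v_{\omega_{i_p}},wg\,w_pv_{\omega_{i_p}})/(w_{p-1}v_{\omega_{i_p}},wg\,w_{p-1}v_{\omega_{i_p}})$ with $c_p=1/c'$, a nonzero constant depending only on the representative $s_{\beta_p}$ and the root vector $X_{\beta_p}$ through $c'$. I expect the main obstacle to be the second step: verifying that exactly one power of $q_p$ survives and that the spurious $w_pv_{\omega_{i_p}}$-term drops out of the numerator. Both rest entirely on the reducedness of $w$ and the subword characterization of positivity of the roots $w_{p-1}^{-1}\beta_j$, $w_p^{-1}\beta_j$ and $v'\alpha_{i_p}$; once these positivity facts are in place the remaining manipulations with the contravariant form are formal.
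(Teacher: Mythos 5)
Your argument is correct and reaches (\ref{tp}), but it handles the unipotent factors differently from the paper. The shared core is the rank--one computation: both proofs use $\beta_p\notin\Delta_{w_{p-1}^{-1}}$ and $\langle w_{p-1}\omega_{i_p},\beta_p^{\vee}\rangle=1$ to place $w_{p-1}v_{\omega_{i_p}}$ at the top of a two--dimensional $\langle X_{\pm\beta_p}\rangle$--module, so that $X_{\beta_p}(q_p)w_pv_{\omega_{i_p}}$ contributes exactly one term linear in $q_p$, and both isolate that term by orthogonality of distinct weight spaces. Where you diverge is in eliminating $n\in N_+$ and the tail $X_{\beta_k}(q_k)\cdots X_{\beta_{p+1}}(q_{p+1})$: the paper keeps $w$ on the right of the form and kills these factors through successive factorizations $wnw^{-1}h=m_2hm_3$ and $m_3X_{\beta_p}(q_p)=X_{\beta_p}(q_p)m_4m_5$ over minimal segments of the circular normal ordering, which rests on Lemma \ref{minsegm} and the commutation relations between one--parametric subgroups; you transpose $w$ across the contravariant form and dispose of $n$ by the single weight inequality $v\omega_{i_p}=v'\omega_{i_p}-v'\alpha_{i_p}$ with $v'\alpha_{i_p}\in\Delta_+$, together with the fact that $N_+$ only raises weights. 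Your route is more elementary and self--contained --- it needs nothing beyond the subword criterion for positivity of $w_{p-1}^{-1}\beta_j$, $w_p^{-1}\beta_j$ and $v'\alpha_{i_p}$ --- and it makes transparent in one line why all of $n$ drops out of both matrix coefficients. The factorization technique the paper uses here is, however, the one that carries over to the following proposition (formula (\ref{tind})), where the middle factor is $n_szs^{-1}$ rather than $w^{-1}h$ and a bare weight comparison no longer suffices without the special properties of the normal ordering associated to $s$. Two points you should make explicit if you write this up: the step $\omega(w)=w^{-1}$ requires fixing the representative of $w$ as the product $s_{i_1}\cdots s_{i_k}$ of the chosen representatives of simple reflections, and the identifications $w^{-1}w_{p-1}=s_{i_k}\cdots s_{i_p}$, $w^{-1}w_{p}=s_{i_k}\cdots s_{i_{p+1}}$ hold in $G$ only up to elements of $H$; neither affects the conclusion, since the resulting nonzero scalars appear in both the numerator and the denominator of (\ref{tp}) and cancel, but they should be acknowledged.
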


\begin{proof}
First, by Lemma \ref{minsegm} and using commutation relations between one--parametric subgroups we can write $n_{w^{-1}}=X_{\beta_k}(q_k)\ldots X_{\beta_p}(q_p)=X_{\beta_p}(q_p)m_1$, $m_1\in N_{[\beta_{p+1}, \beta_k]}$. Since $\Delta_{w_p^{-1}}=\{\beta_1, \ldots, \beta_p\}$ we have $w_p^{-1}N_{[\beta_{p+1}, \beta_k]}w_p\subset N_+$, and hence
$$n_{w^{-1}}w_p v_{\omega_{i_p}}=X_{\beta_p}(q_p)m_1w_p v_{\omega_{i_p}}=X_{\beta_p}(q_p)w_pw_p^{-1}m_1w_p v_{\omega_{i_p}}=X_{\beta_p}(q_p)w_pv_{\omega_{i_p}}$$ as $v_{\omega_{i_p}}$ is a highest weight vector. We deduce that
$$
(w_{p-1}v_{\omega_{i_p}},wgw_p v_{\omega_{i_p}})=(w_{p-1}v_{\omega_{i_p}},wnw^{-1}hX_{\beta_p}(q_p)w_pv_{\omega_{i_p}}).
$$

Now observe that $wnw^{-1}\in N_{[\beta_{k+1}, -\beta_k]}$ and that $H$ normalizes each one--parametric subgroup $N_\alpha$. Therefore using Lemma \ref{minsegm} and commutation relations between one--parametric subgroups one can uniquely factorize the element $wnw^{-1}h$ as $wnw^{-1}h=m_2hm_3$, $m_2\in N_{[-\beta_p,-\beta_k]}$, $m_3\in N_{[\beta_{k+1},-\beta_{p-1}]}$. Since $\omega(m_2)\in N_{[\beta_p,\beta_k]}$, $w_{p-1}^{-1}N_{[\beta_{p}, \beta_k]}w_{p-1}\subset N_+$ and $v_{\omega_{i_p}}$ is a highest weight vector we obtain
$$
(w_{p-1}v_{\omega_{i_p}},wgw_p v_{\omega_{i_p}})=(\omega(m_2)w_{p-1}v_{\omega_{i_p}},hm_3X_{\beta_p}(q_p)w_pv_{\omega_{i_p}})=
(w_{p-1}v_{\omega_{i_p}},hm_3X_{\beta_p}(q_p)w_pv_{\omega_{i_p}}).
$$

Observe also that using Lemma \ref{minsegm} and commutation relations between one--parametric subgroups we can a uniquely factorize the element $m_3X_{\beta_p}(q_p)\in N_{[\beta_{p},-\beta_{p-1}]}$ as $m_3X_{\beta_p}(q_p)=X_{\beta_p}(q_p)m_4m_5$,
$m_4\in N_{[\beta_{p+1},\beta_{D}]}$, $m_5\in N_{[-\beta_{1},-\beta_{p-1}]}$. Finally remark that $w_{p}^{-1}N_{[-\beta_{1},-\beta_{p-1}]}w_p\subset N_+$ and $w_{p}^{-1}N_{[\beta_{p+1},\beta_{D}]}w_p\subset N_+$ as $\Delta_{w_p^{-1}}=\{\beta_1, \ldots, \beta_p\}$ and hence
$$
(w_{p-1}v_{\omega_{i_p}},wgw_p v_{\omega_{i_p}})=(w_{p-1}v_{\omega_{i_p}},hX_{\beta_p}(q_p)m_4m_5w_pv_{\omega_{i_p}})=
(w_{p-1}v_{\omega_{i_p}},hX_{\beta_p}(q_p)w_pv_{\omega_{i_p}})
$$
as $v_{\omega_{i_p}}$ is a highest weight vector.

Note that the vector $w_{p-1}v_{\omega_{i_p}}$ has weight $w_{p-1}{\omega_{i_p}}$. Since weight spaces corresponding to different weights are orthogonal with respect to the contravariant bilinear non--degenerate form on $V_{\omega_{i_p}}$ only terms of weight $w_{p-1}{\omega_{i_p}}$ in the element $hX_{\beta_p}(q_p)w_pv_{\omega_{i_p}}$ will give non--trivial contributions to the scalar product $(w_{p-1}v_{\omega_{i_p}},hX_{\beta_p}(q_p)w_pv_{\omega_{i_p}})$. To find these terms we observe that each weight space is an eigenspace for the action of $H$ and that $X_{\beta_p}(q_p)w_pv_{\omega_{i_p}}=X_{\beta_p}(q_p)s_{\beta_p}w_{p-1}v_{\omega_{i_p}}$. Since $\beta_p\not\in \Delta_{w_{p-1}^{-1}}$ we infer $X_{\beta_p}w_{p-1}v_{\omega_{i_p}}=0$, and hence $w_{p-1}v_{\omega_{i_p}}$ is a highest weight vector for the ${\mathfrak s l}_2$--triple generated by the elements $X_{\pm\beta_p}$. Moreover, since $w_{p-1}^{-1}(-\beta_p)=-\alpha_{i_p}$ the vector $X_{-\alpha_{i_p}}=w_{p-1}^{-1}X_{-\beta_p}w_{p-1}$ is a root vector corresponding to $-\alpha_{i_p}$, and hence $X_{-\beta_p}^2w_{p-1}v_{\omega_{i_p}}=w_{p-1}X_{-\alpha_{i_p}}^2v_{\omega_{i_p}}=0$ by the definition of $v_{\omega_{i_p}}$. Therefore $w_{p-1}v_{\omega_{i_p}}$ is a highest weight vector for the two--dimensional irreducible representation of the ${\mathfrak s l}_2$--triple generated by the elements $X_{\pm\beta_p}$, and $s_{\beta_p}w_{p-1}v_{\omega_{i_p}}$ is a non--zero lowest weight vector for that representation. Recalling the standard ${\mathfrak s l}_2$--representation theory we deduce that
$$
X_{\beta_p}(q_p)w_pv_{\omega_{i_p}}=X_{\beta_p}(q_p)s_{\beta_p}w_{p-1}v_{\omega_{i_p}}=
s_{\beta_p}w_{p-1}v_{\omega_{i_p}}+q_pX_{\beta_p}s_{\beta_p}w_{p-1}v_{\omega_{i_p}}=
s_{\beta_p}w_{p-1}v_{\omega_{i_p}}+\frac{q_p}{c_p}w_{p-1}v_{\omega_{i_p}},
$$
where $c_p$ is a non--zero constant only depending on the choice of the representative $s_{\beta_p}\in G$ and on the choice of the root vector $X_{\pm\beta_p}$.

The only term of weight $w_{p-1}{\omega_{i_p}}$ in the right hand side of the last identity is $\frac{q_p}{c_p}w_{p-1}v_{\omega_{i_p}}$, and hence
$$
(w_{p-1}v_{\omega_{i_p}},wgw_p v_{\omega_{i_p}})=
(w_{p-1}v_{\omega_{i_p}},hX_{\beta_p}(q_p)w_pv_{\omega_{i_p}})=(w_{p-1}v_{\omega_{i_p}},h\frac{q_p}{c_p}w_{p-1}v_{\omega_{i_p}}),
$$
and
$$
q_p=c_p\frac{(w_{p-1}v_{\omega_{i_p}},wgw_p v_{\omega_{i_p}})}{(w_{p-1}v_{\omega_{i_p}},hw_{p-1}v_{\omega_{i_p}})}.
$$

Similar arguments show that
$$
(w_{p-1}v_{\omega_{i_p}},hw_{p-1}v_{\omega_{i_p}})=(w_{p-1}v_{\omega_{i_p}},wgw_{p-1}v_{\omega_{i_p}}).
$$

Combining the last two identities we obtain formula (\ref{tp}).

\end{proof}

Let $g=nw^{-1}hn_{w^{-1}}\in B_+w^{-1}B_+$, $n\in N_+, h\in H$, $n_{w^{-1}}=X_{\beta_k}(q_k)\ldots X_{\beta_1}(q_1)$ be an arbitrary element. Using formula (\ref{tp}) one can find the numbers $q_p$ inductively. Namely,
$$
q_1=c_1\frac{(v_{\omega_{i_1}},wgw_1 v_{\omega_{i_1}})}{(v_{\omega_{i_1}},wg v_{\omega_{i_1}})},
$$
and if $q_1,\ldots, q_{p-1}$ are already found then
\begin{equation}\label{c1}
q_p=c_p\frac{(w_{p-1}v_{\omega_p},wgX_{\beta_1}(-q_1)\ldots X_{\beta_{p-1}}(-q_{p-1})w_p v_{\omega_p})}{(w_{p-1}v_{\omega_p},wgX_{\beta_1}(-q_1)\ldots X_{\beta_{p-1}}(-q_{p-1})w_{p-1} v_{\omega_p})}.
\end{equation}

Note also that once the numbers $q_p$ are found, and hence the element $n_{w^{-1}}$ is determined one can also find $n$ using Proposition 2.11 in \cite{FZ}.

Indeed, let $\overline{w}=s_{i_1}\ldots s_{i_D}$ be the reduced decomposition of the longest element of the Weyl group corresponding to a normal ordering $\beta_1,\ldots ,\beta_D$ of the system of positive roots $\Delta_+$. Then $\beta_j=s_{i_1}\ldots s_{i_{j-1}}\alpha_j$. Consider the elements $w_j=s_{i_1}\ldots s_{i_{j}}=s_{\beta_j}\ldots s_{\beta_1}$. Observe that $\Delta_{w_j^{-1}}=\{\beta_1, \ldots, \beta_j\}=\{\alpha \in \Delta_+:w_j^{-1}\alpha\in \Delta_-\}$. The element $n$ can be uniquely represented in the form $n=X_{\beta_1}(r_1)\ldots X_{\beta_D}(r_D)$.
Assume that the root vectors $X_\alpha$ used in the definition of one--parametric subgroups $X_\alpha(t)=\exp(tX_\alpha)$ are chosen in such a way that $\omega(X_\alpha)=X_{-\alpha}$. Then according to Propositions 2.6 and 2.11 in \cite{FZ} we have
\begin{equation}\label{c2}
r_p=d_p\frac{(w_{p-1}v_{\omega_p},nw^{-1}hww_p v_{\omega_p})}{(w_{p}v_{\omega_p},nw^{-1}hww_{p} v_{\omega_p})}=d_p\frac{(w_{p-1}v_{\omega_p},gn_{w^{-1}}^{-1}ww_p v_{\omega_p})}{(w_{p}v_{\omega_p},gn_{w^{-1}}^{-1}ww_{p} v_{\omega_p})},
\end{equation}
where $d_p$ is a non--zero constant only depending on the choice of the representative $s_{\beta_p}\in G$ and on the choice of the root vector $X_{\beta_p}$, and it is assumed that $w_0=1$.

Finally once we know the coefficients $r_i$ we can find $n$ and $h=wn^{-1}gn_{w^{-1}}^{-1}$. Thus we have completely described $g$ in terms of some matrix elements of finite--dimensional irreducible representations. The functions $q_i, r_i$ defined by (\ref{c1}) and (\ref{c2}) together with the fundamental weights evaluated at $h=wn^{-1}gn_{w^{-1}}^{-1}$ can be regarded as natural coordinates on the Bruhat cell $B_+w^{-1}B_+$.

Now we introduce functions required for the definition of the operator $\Pi$.
Let $g\in NZ{s^{-1}}N$. By Proposition \ref{prop2} $g$ can be uniquely represented in the form $g=n^{-1}n_sz{s^{-1}}n$, $n\in N$, $n_s\in N_s$.

\begin{proposition}
Let $g=n^{-1}n_sz{s^{-1}}n\in NZ{s^{-1}}N$. Let $\alpha_i$ be the simple roots of a system of positive roots $\Delta_+^s$ associated to $s$, $s_i$ the corresponding simple reflections, $\beta_1,\ldots ,\beta_D$, $\beta_j=s_{i_1}\ldots s_{i_{j-1}}\alpha_{i_j}$ a normal ordering (\ref{NO}) of $\Delta_+^s$. Denote $\Delta_0\bigcap \Delta_+^s=\{\beta_{d+1},\ldots \beta_D\}$. Let $\omega_i$, $i=1,\ldots l$ be the fundamental weights corresponding to $\Delta_+^s$, $v_{\omega_i}$ a non--zero highest weight vector in the irreducible highest weight representation $V_{\omega_i}$ of highest weight $\omega_i$, $(\cdot,\cdot)$ the contravariant non--degenerate bilinear form on $V_{\omega_i}$ such that $(v_{\omega_i},v_{\omega_i})=1$. Then $n$ can be uniquely factorized as $n=X_{\beta_d}(t_d)\ldots X_{\beta_1}(t_1)$ and the numbers $t_i$ can be found inductively by the following formula
\begin{equation}\label{tind}
t_p=c_p\frac{(w_{p-1}v_{\omega_{i_p}},sg_pw_p v_{\omega_{i_p}})}{(w_{p-1}v_{\omega_{i_p}},sg_pw_{p-1} v_{\omega_{i_p}})},
\end{equation}
where $w_p=s_{\beta_p}\ldots s_{\beta_1}$, $w_{p-1}=s_{\beta_{p-1}}\ldots s_{\beta_1}$, $c_p$ is a non--zero constant only depending on the choice of the representative $s_{\beta_p}\in G$ and on the choice of the root vector $X_{\beta_p}\in \g$, $g_p=n_pgn_p^{-1}$, $n_p=X_{\beta_{p-1}}(t_{p-1})\ldots X_{\beta_1}(t_1)$ and it is assumed that $n_1=1$, $w_0=1$.
\end{proposition}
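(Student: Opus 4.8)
The plan is to follow closely the proof of Proposition \ref{Br}, with $sg_p$ playing the role of the element $wg$ there. First I would record the factorization of the unipotent part. Since $N$ is the unipotent radical of the parabolic $P$, its roots are exactly $\Delta_+^s\setminus\Delta_0=\{\beta_1,\dots,\beta_d\}$, which by Proposition \ref{pord} form the initial segment of normal ordering (\ref{NO}) (the roots of $\Delta_0$ forming the final segment). Hence $n\in N$ has a unique expression $n=X_{\beta_d}(t_d)\cdots X_{\beta_1}(t_1)$, and writing $n=n'n_p$ with $n'=X_{\beta_d}(t_d)\cdots X_{\beta_p}(t_p)$ and $n_p=X_{\beta_{p-1}}(t_{p-1})\cdots X_{\beta_1}(t_1)$ one computes $g_p=n_pgn_p^{-1}=(n')^{-1}n_szs^{-1}n'$. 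Thus the induction is arranged so that $t_p$ is the leading datum of $g_p$, exactly as $q_p$ was for $g$ in Proposition \ref{Br}.

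Next I would reduce the matrix elements to an $\mathfrak{sl}_2$--computation. By Lemma \ref{minsegm} and the commutation relations one has $n'=X_{\beta_p}(t_p)m_1$ with $m_1\in N_{[\beta_{p+1},\beta_d]}$; since $\Delta_{w_p^{-1}}=\{\beta_1,\dots,\beta_p\}$ we get $w_p^{-1}N_{[\beta_{p+1},\beta_d]}w_p\subset N_+$, whence $n'w_pv_{\omega_{i_p}}=X_{\beta_p}(t_p)w_pv_{\omega_{i_p}}$, and similarly $n'w_{p-1}v_{\omega_{i_p}}=w_{p-1}v_{\omega_{i_p}}$. Because $\beta_p=w_{p-1}\alpha_{i_p}$ and $\beta_p\notin\Delta_{w_{p-1}^{-1}}$, the vector $w_{p-1}v_{\omega_{i_p}}$ is a highest weight vector for the $\mathfrak{sl}_2$--triple of $\beta_p$ spanning a two--dimensional submodule, so $X_{\beta_p}(t_p)w_pv_{\omega_{i_p}}=w_pv_{\omega_{i_p}}+\tfrac{t_p}{c_p}w_{p-1}v_{\omega_{i_p}}$, just as in Proposition \ref{Br}. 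Substituting into $sg_pw_pv_{\omega_{i_p}}=s(n')^{-1}n_szs^{-1}\,n'w_pv_{\omega_{i_p}}$ I obtain
\[
(w_{p-1}v_{\omega_{i_p}},sg_pw_pv_{\omega_{i_p}})=
(w_{p-1}v_{\omega_{i_p}},s(n')^{-1}n_szs^{-1}w_pv_{\omega_{i_p}})
+\tfrac{t_p}{c_p}(w_{p-1}v_{\omega_{i_p}},sg_pw_{p-1}v_{\omega_{i_p}}),
\]
so that formula (\ref{tind}) is equivalent to the vanishing of the cross term $(w_{p-1}v_{\omega_{i_p}},s(n')^{-1}n_szs^{-1}w_pv_{\omega_{i_p}})$, together with non--vanishing of the denominator.

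The heart of the proof is this vanishing, and it is where I expect the main difficulty. Transporting $s$ to the left via $\omega(s)=s^{-1}$ and comparing weights, a non--zero contribution would force $s^{-1}\beta_p$ to be a non--negative integral combination of the roots occurring in $(n')^{-1}$, namely $\{\beta_p,\dots,\beta_d\}$, and in $n_s$, which by the defining property $sn_ss^{-1}\in\overline{N}$ lie in $\Delta_s$, modulo the $\mathbb{Z}\Delta_0$ produced by $z\in Z\subset L$. Applying the functional $\bar h_0$, which vanishes on $\Delta_0$ and is strictly positive on every root of $\Delta_+^s\setminus\Delta_0$, disposes of the case $\bar h_0(s^{-1}\beta_p)\le 0$, that is $s^{-1}\beta_p\in\Delta_-^s$. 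The remaining case $s^{-1}\beta_p\in\Delta_+^s\setminus\Delta_0$ is the genuine obstacle: here the weights are a priori compatible and one must exclude the decomposition combinatorially. This is exactly what the last three properties of Proposition \ref{pord} are designed for --- in particular $s\alpha>\alpha$ for positive $\alpha\notin\Delta_0$ with $s\alpha\in\Delta_+^s$ (so that $s^{-1}\beta_p<\beta_p$), together with the fact that $\Delta_s$ and $\Delta_0$ occupy a terminal segment of (\ref{NO}) --- which place $s^{-1}\beta_p$ outside the cone generated by the available raising roots and thereby force the cross term to vanish.

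Finally I would verify that the denominator $(w_{p-1}v_{\omega_{i_p}},s(n')^{-1}n_szs^{-1}w_{p-1}v_{\omega_{i_p}})$ is non--zero: the same weight bookkeeping shows that the coefficient of $w_{p-1}v_{\omega_{i_p}}$ can only receive the weight--preserving (toral) part of $z$, a non--zero scalar, precisely as the factor $(w_{p-1}v_{\omega_{i_p}},hw_{p-1}v_{\omega_{i_p}})$ in Proposition \ref{Br}. Combining the vanishing of the cross term with this, the displayed identity yields $t_p=c_p\,(\text{numerator})/(\text{denominator})$, which is (\ref{tind}).
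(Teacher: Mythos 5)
Your reduction is correct and follows the paper's strategy up to the decisive point: you factor $n=n'n_p$, show $n'w_pv_{\omega_{i_p}}=X_{\beta_p}(t_p)w_pv_{\omega_{i_p}}$ and $n'w_{p-1}v_{\omega_{i_p}}=w_{p-1}v_{\omega_{i_p}}$, expand $X_{\beta_p}(t_p)w_pv_{\omega_{i_p}}$ via the two--dimensional $\mathfrak{sl}_2$--module, and correctly reduce (\ref{tind}) to the vanishing of the cross term $(w_{p-1}v_{\omega_{i_p}},s(n')^{-1}n_szs^{-1}w_pv_{\omega_{i_p}})$. But that vanishing is the entire content of the proof, and your argument for it does not close. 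Your weight bookkeeping requires excluding a relation $s^{-1}\beta_p=\mu+\sum_{\alpha\in\Delta_s}c_\alpha\alpha+\sum_{j\ge p}d_j\beta_j$ with $c_\alpha,d_j\ge 0$ and $\mu$ in the root lattice of $\Delta_0$. The functional $\bar h_0$ kills the case $s^{-1}\beta_p\in\Delta_-^s$, as you say, but in the remaining case $s^{-1}\beta_p\in\Delta_+^s\setminus\Delta_0$ both sides are non--negative under $\bar h_0$, and the asserted ``cone'' obstruction does not exist: $z$ lies in the Levi factor, so $\mu$ ranges over all of $\mathbb{Z}\Delta_0$ including negative combinations, and the available shifts therefore do not span a pointed cone missing $s^{-1}\beta_p$ (for instance a relation $s^{-1}\beta_p=\beta_j-\alpha_0$ with $j\ge p$, $\alpha_0\in\Delta_0$ is not excluded by any of the ordering properties you invoke, since Lemma \ref{minsegm} only constrains $\beta_j=s^{-1}\beta_p+\alpha_0$ to lie somewhere after $s^{-1}\beta_p$). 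Asserting that the last three properties of Proposition \ref{pord} ``are designed for'' this is not a proof.

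The paper avoids this difficulty by never performing the weight comparison in the form you set up. It factorizes $X_{\beta_p}(-t_p)\cdots X_{\beta_d}(-t_d)n_s=m_2m_3$ with $m_2\in N_{[\beta_k,\beta_d]}$ supported on the terminal segment $\Delta_s$ and $m_3\in N_{[\beta_p,\beta_{k-1}]}$; the properties $s\alpha>\alpha$ and $s\alpha+\alpha_0>\alpha$ from Proposition \ref{pord} are used at the \emph{group} level to show $z_1^{-1}sm_3s^{-1}z_1\in N_{[\beta_{p+1},\beta_D]}$, which is then absorbed into $w_pv_{\omega_{i_p}}$; meanwhile $sm_2s^{-1}\in N_-^s$ is moved across the contravariant form, and $\omega(sm_2s^{-1})$ is split as $m_6m_7$ with $m_7\in N_{[\beta_p,\beta_D]}$ fixing $w_{p-1}v_{\omega_{i_p}}$ and $m_6\in N_{[\beta_1,\beta_{p-1}]}$. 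Only then does one compare weights: $z_2m_6w_{p-1}v_{\omega_{i_p}}$ has weights $w_{p-1}\omega_{i_p}+\sum_{q<p}c_q\beta_q+\omega_0$ with $c_q\ge0$ and $\omega_0\in\h_0^*$, and matching against the weight $w_{p-1}\omega_{i_p}-\beta_p$ forces $\sum_{q<p}c_q\beta_q+\omega_0=-\beta_p$, which $\bar h_0$ rules out because the target is now strictly negative while every available shift is non--negative. You would need to reproduce this rearrangement (or supply a genuine substitute); as written, the key step of your proof is missing.
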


\begin{proof}
The proof of this proposition is similar to that of the previous proposition. The numbers $t_p$ can be found by induction starting with $p=1$. We shall establish the induction step. The case $p=1$ corresponding to the base of the induction can be considered in a similar way.

Assume that $t_1, \ldots , t_{p-1}$ have already been found. Then
$$sg_p=sn_pgn_p^{-1}=sX_{\beta_{p}}(-t_{p})\ldots X_{\beta_d}(-t_d)n_szs^{-1}X_{\beta_{d}}(t_{d})\ldots X_{\beta_p}(t_p), n_p=X_{\beta_{p-1}}(t_{p-1})\ldots X_{\beta_1}(t_1).$$
By Lemma \ref{minsegm} and using commutation relations between one--parametric subgroups we can write $X_{\beta_{d}}(t_{d})\ldots X_{\beta_p}(t_p)=X_{\beta_p}(t_p)m_1$, $m_1\in N_{[\beta_{p+1}, \beta_d]}$. Since $\Delta_{w_p^{-1}}=\{\beta_1, \ldots, \beta_p\}$ we have $w_p^{-1}N_{[\beta_{p+1}, \beta_d]}w_p\subset N_+^s$, and hence
$$X_{\beta_{d}}(t_{d})\ldots X_{\beta_p}(t_p)w_p v_{\omega_{i_p}}=X_{\beta_p}(t_p)m_1w_p v_{\omega_{i_p}}=X_{\beta_p}(t_p)w_pw_p^{-1}m_1w_p v_{\omega_{i_p}}=X_{\beta_p}(t_p)w_pv_{\omega_{i_p}}$$ as $v_{\omega_{i_p}}$ is a highest weight vector. We deduce that
$$
(w_{p-1}v_{\omega_{i_p}},sg_pw_p v_{\omega_{i_p}})=(w_{p-1}v_{\omega_{i_p}},sX_{\beta_{p}}(-t_{p})\ldots X_{\beta_d}(-t_d)n_szs^{-1}X_{\beta_p}(t_p)w_pv_{\omega_{i_p}}).
$$

Now observe that $X_{\beta_{p}}(-t_{p})\ldots X_{\beta_d}(-t_d)n_s \in N_{[\beta_{p}, \beta_d]}N_s$. Note that the normal ordering in $\Delta_+^s$ associated to $s$ has the property that the set $\Delta_s$ is a segment of the form $\beta_k,\ldots , \beta_d$. Therefore the union $[\beta_{p}, \beta_d]\bigcup [\beta_{k}, \beta_d]$ is also a minimal segment and the subgroup $N_{[\beta_{p}, \beta_d]}N_s$ is generated by one--parametric subgroups corresponding to the roots from that segment.
Therefore using Lemma \ref{minsegm} and commutation relations between one--parametric subgroups one can uniquely factorize the element $X_{\beta_{p}}(-t_{p})\ldots X_{\beta_d}(-t_d)n_s$ as $X_{\beta_{p}}(-t_{p})\ldots X_{\beta_d}(-t_d)n_s=m_2m_3$, $m_2\in N_{[\beta_k,\beta_d]}$, $m_3\in N_{[\beta_{p},\beta_{k-1}]}$, where it is assumed that $N_{[\beta_{p},\beta_{k-1}]}=1$ if $p>k-1$. If $\alpha\in [\beta_{p},\beta_{k-1}]$ then $s\alpha\in \Delta_+^s$ and by the properties of the normal ordering in $\Delta_+^s$ associated to $s$ we have $s\alpha >\alpha$, and if $s\alpha +\alpha_0 \in \Delta_+^s$  for $\alpha_0\in \Delta_0$ then $s\alpha+\alpha_0>\alpha$. Observing also that $Z$ is generated by one--parametric subgroups corresponding to roots from $\Delta_0$ and by the centralizer of $s$ in $H$ which normalizes all one--parametric subgroups corresponding to roots, we deduce $sX_{\beta_{p}}(-t_{p})\ldots X_{\beta_d}(-t_d)n_szs^{-1}=sm_2s^{-1}z_1m_4$,
$m_4=z_1^{-1}sm_3s^{-1}z_1\in N_{[\beta_{p+1},\beta_{D}]}$, $z_1=szs^{-1}\in Z$.

Observe now that using Lemma \ref{minsegm} and commutation relations between one--parametric subgroups we can a uniquely factorize the element $m_4X_{\beta_p}(t_p)\in N_{[\beta_{p},\beta_{D}]}$ as $m_4X_{\beta_p}(t_p)=X_{\beta_p}(t_p)m_5$,
$m_5\in N_{[\beta_{p+1},\beta_{D}]}$. Remark that $w_{p}^{-1}N_{[\beta_{p+1},\beta_{D}]}w_p\subset N_+^s$ as $\Delta_{w_p^{-1}}=\{\beta_1, \ldots, \beta_p\}$ and hence
$$
\begin{array}{l}
(w_{p-1}v_{\omega_{i_p}},sg_pw_p v_{\omega_{i_p}})=(w_{p-1}v_{\omega_{i_p}},sm_2s^{-1}z_1m_4X_{\beta_p}(t_p)w_pv_{\omega_{i_p}})= \\
\\
=(w_{p-1}v_{\omega_{i_p}},sm_2s^{-1}z_1X_{\beta_p}(t_p)m_5w_pv_{\omega_{i_p}})=
(\omega(z_1)\omega(sm_2s^{-1})w_{p-1}v_{\omega_{i_p}},X_{\beta_p}(t_p)w_pv_{\omega_{i_p}}).
\end{array}
$$
as $v_{\omega_{i_p}}$ is a highest weight vector.

By the definition of $m_2$ we have $sm_2s^{-1}\in N_-^s$ , and hence $\omega(sm_2s^{-1})\in N_+^s$. Therefore using arguments similar to those used above we can factorize $\omega(sm_2s^{-1})=m_6m_7$, $m_6\in N_{[\beta_1,\beta_{p-1}]}$, $m_7\in N_{[\beta_p,\beta_{D}]}$ and obtain that
$$
\begin{array}{l}
(w_{p-1}v_{\omega_{i_p}},sg_pw_p v_{\omega_{i_p}})=
(\omega(z_1)\omega(sm_2s^{-1})w_{p-1}v_{\omega_{i_p}},X_{\beta_p}(t_p)w_pv_{\omega_{i_p}})= \\
\\
=(z_2m_6m_7w_{p-1}v_{\omega_{i_p}},X_{\beta_p}(t_p)w_pv_{\omega_{i_p}})=
(z_2m_6w_{p-1}v_{\omega_{i_p}},X_{\beta_p}(t_p)w_pv_{\omega_{i_p}}), z_2=\omega(z_1)\in Z.
\end{array}
$$

As we already showed in the previous proof
$$
X_{\beta_p}(t_p)w_pv_{\omega_{i_p}}=
s_{\beta_p}w_{p-1}v_{\omega_{i_p}}+\frac{t_p}{c_p}w_{p-1}v_{\omega_{i_p}}=
cX_{-\beta_p}w_{p-1}v_{\omega_{i_p}}+\frac{t_p}{c_p}w_{p-1}v_{\omega_{i_p}}, c\in \mathbb{C},
$$
so the first term in the last sum has weight $-\beta_p+w_{p-1}\omega_{i_p}$, and the second one $w_{p-1}\omega_{i_p}$.

The vector $z_2m_6w_{p-1}v_{\omega_{i_p}}$ is a linear combination of vectors of weights of the form $w_{p-1}\omega_{i_p}+\sum_{q=1}^{p-1}c_q\beta_q+\omega_0$, where $\omega_0\in \h_0^*$, $c_q\in \{0,1,2,\ldots \}$.
Since weight spaces corresponding to different weights are orthogonal with respect to the contravariant bilinear non--degenerate form on $V_{\omega_{i_p}}$ the only nontrivial contributions to the product $(z_2m_6w_{p-1}v_{\omega_{i_p}},X_{\beta_p}(t_p)w_pv_{\omega_{i_p}})$ come from the products of vectors of weights either $-\beta_p+w_{p-1}\omega_{i_p}$ or $w_{p-1}\omega_{i_p}$.

In the first case we must have $w_{p-1}\omega_{i_p}+\sum_{q=1}^{p-1}c_q\beta_q+\omega_0=-\beta_p+w_{p-1}\omega_{i_p}$, and hence $\sum_{q=1}^{p-1}c_q\beta_q+\omega_0=-\beta_p$. In particular, $\overline{h}_0(\sum_{q=1}^{p-1}c_q\beta_q+\omega_0)=\sum_{q=1}^{p-1}c_q\overline{h}_0(\beta_q)=-\overline{h}_0(\beta_p)$ which is impossible as $-\overline{h}_0(\beta_p)<0$ and $c_q\overline{h}_0(\beta_q)\geq 0$.

In the second case we must have $w_{p-1}\omega_{i_p}+\sum_{q=1}^{p-1}c_q\beta_q+\omega_0=w_{p-1}\omega_{i_p}$ or $\sum_{q=1}^{p-1}c_q\beta_q+\omega_0=0$. In particular, $\overline{h}_0(\sum_{q=1}^{p-1}c_q\beta_q+\omega_0)=\sum_{q=1}^{p-1}c_q\overline{h}_0(\beta_q)=0$ which forces $c_q=0$ for all $q$ as $\overline{h}_0(\beta_q)>0$ and $c_q\in \{0,1,2,\ldots \}$, and hence $\omega_0=0$ as well.

We conclude that the only nontrivial contributions to the product $(z_2m_6w_{p-1}v_{\omega_{i_p}},X_{\beta_p}(t_p)w_pv_{\omega_{i_p}})$ come from the products of vectors of weights $w_{p-1}\omega_{i_p}$. By the above considerations only terms of the form $z_2w_{p-1}v_{\omega_{i_p}}$ may give contributions of weight $w_{p-1}\omega_{i_p}$ in the weight decomposition of the element $z_2m_6w_{p-1}v_{\omega_{i_p}}$, and this yields
$$
\begin{array}{l}
(w_{p-1}v_{\omega_{i_p}},sg_pw_p v_{\omega_{i_p}})=
(z_2m_6w_{p-1}v_{\omega_{i_p}},X_{\beta_p}(t_p)w_pv_{\omega_{i_p}})= \\
\\
=\frac{t_p}{c_p}(z_2w_{p-1}v_{\omega_{i_p}},w_{p-1}v_{\omega_{i_p}})=
\frac{t_p}{c_p}(w_{p-1}v_{\omega_{i_p}},szs^{-1}w_{p-1}v_{\omega_{i_p}}).
\end{array}
$$

Therefore
$$
t_p=c_p\frac{(w_{p-1}v_{\omega_{i_p}},sg_pw_p v_{\omega_{i_p}})}{(w_{p-1}v_{\omega_{i_p}},szs^{-1}w_{p-1}v_{\omega_{i_p}})}.
$$

Similar arguments show that
$$
(w_{p-1}v_{\omega_{i_p}},szs^{-1}w_{p-1}v_{\omega_{i_p}})=(w_{p-1}v_{\omega_{i_p}},sg_pw_{p-1}v_{\omega_{i_p}}).
$$

Combining the last two identities we obtain formula (\ref{tind}).

\end{proof}

\begin{remark}
Note that in the proof of  Proposition 6.2 in \cite{S12} it is shown that $NZs^{-1}N$ is a closed subvariety in $G$. Since (\ref{cross}) is an isomorphism of varieties the right hand side of (\ref{tind}) is a regular function on $NZs^{-1}N$, and hence the denominator in (\ref{tind}) must be canceled.
\end{remark}

Observing that in the notation of the previous proposition for $g=n^{-1}n_szs^{-1}n$ we have $g_{d+1}=n_szs^{-1}=n_{d+1}gn_{d+1}^{-1}$, $n=n_{d+1}=X_{\beta_{d}}(t_{d})\ldots X_{\beta_1}(t_1)$ and that the map (\ref{cross}) is an isomorphism of varieties we infer the following proposition from the previous statement.
\begin{proposition}\label{proj}
Let $A_p$, $p=1,\ldots,d$ be the rational function on $G$ defined by
$$
A_p(g)=c_p\frac{(w_{p-1}v_{\omega_{i_p}},sgw_p v_{\omega_{i_p}})}{(w_{p-1}v_{\omega_{i_p}},sgw_{p-1} v_{\omega_{i_p}})},
$$
and $Pi_p$ the operator on the space of rational functions on $G$ induced by conjugation by the element $\exp(A_pX_{\beta_p})$,
$$
\Pi_p f(g)=f(\exp(A_p(g)X_{\beta_p})g\exp(-A_p(g)X_{\beta_p})).
$$
Then the composition $\Pi=\Pi_1\circ \ldots \circ \Pi_d$ gives rise to a well--defined operator
$$
\Pi=\Pi_1\circ \ldots \circ \Pi_d:\mathbb{C}[NZs^{-1}N]\rightarrow \mathbb{C}[NZs^{-1}N]^N,
$$
which is a projection operator onto the subspace $[NZs^{-1}N]^N$ of $N$--invariant regular functions on $NZs^{-1}N$.
\end{proposition}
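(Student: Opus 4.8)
The plan is to identify $\Pi$ with the abstract projection operator $\overline{P}$ of (\ref{iso})--(\ref{PC}) attached to the conjugation action of $U=N$ on $M=NZs^{-1}N$, whose cross--section $X=\Sigma=N_sZs^{-1}$ is furnished by Proposition \ref{prop2}. In the notation of (\ref{P1}), writing $g=n^{-1}n_szs^{-1}n$ exhibits the group component as $n^{-1}$ and the cross--section component as $x(g)=n_szs^{-1}$, so the whole calculation aims at the single formula $\Pi f(g)=f(n_szs^{-1})$; regularity, $N$--invariance and idempotency will all be read off from it.

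First I would record that the roots of the unipotent radical $N$ are exactly $\{\beta_1,\ldots,\beta_d\}=\Delta_+^s\setminus\Delta_0$, so the chosen normal ordering factorizes every $n\in N$ uniquely as $X_{\beta_d}(t_d)\ldots X_{\beta_1}(t_1)$ and presents $N$ as the ordered product of the one--parametric subgroups $N_{\beta_p}$, which is the decomposition required in (\ref{Pi}). The heart of the proof is then a telescoping induction. Putting $g^{(0)}=g$ and $g^{(p)}=X_{\beta_p}(A_p(g^{(p-1)}))\,g^{(p-1)}\,X_{\beta_p}(-A_p(g^{(p-1)}))$, one has $(\Pi_1\circ\ldots\circ\Pi_d f)(g)=f(g^{(d)})$, and I would prove $g^{(p-1)}=g_p=n_pgn_p^{-1}$ by induction on $p$: conjugating $g=n^{-1}(n_szs^{-1})n$ by $n_p=X_{\beta_{p-1}}(t_{p-1})\ldots X_{\beta_1}(t_1)$ cancels exactly these factors from $n$, while formula (\ref{tind}) gives $A_p(g_p)=t_p$, so that $g^{(p)}=X_{\beta_p}(t_p)g_pX_{\beta_p}(-t_p)=g_{p+1}$. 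At $p=d$ this yields $g^{(d)}=ngn^{-1}=n_szs^{-1}$, as already observed before the statement, whence $\Pi f(g)=f(n_szs^{-1})$.

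With this formula the three assertions are immediate. The coordinates $t_p$ are regular on $NZs^{-1}N$---this is the content of (\ref{tind}) together with the Remark, where the denominator cancels---so $g\mapsto ngn^{-1}$ is a morphism $NZs^{-1}N\to\Sigma\subset NZs^{-1}N$ and $\Pi f=f\circ(g\mapsto ngn^{-1})$ is regular, which is exactly the well--definedness $\mathbb{C}[NZs^{-1}N]\to\mathbb{C}[NZs^{-1}N]$ that is claimed. Conjugating $g=n^{-1}(n_szs^{-1})n$ by any $v\in N$ replaces $n$ by $nv^{-1}$ but leaves $x(g)=n_szs^{-1}$ unchanged, so $\Pi f$ is $N$--invariant; and if $f$ is already $N$--invariant then $f(ngn^{-1})=f(g)$, giving $\Pi f=f$. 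Hence $\Pi$ is idempotent with image precisely $\mathbb{C}[NZs^{-1}N]^N$.

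The hard part is not the algebra but the well--definedness: each individual $\Pi_p$ sends a regular function to a merely rational one, since $A_p$ carries the denominator $(w_{p-1}v_{\omega_{i_p}},sgw_{p-1}v_{\omega_{i_p}})$, and one must be sure these poles disappear after composition. The telescoping is exactly what resolves this, for in the composition $A_p$ is only ever evaluated at the conjugated point $g_p\in NZs^{-1}N$, where (\ref{tind}) and the Remark force it to equal the regular coordinate $t_p$. Thus the genuine work is supplied by the previous proposition, and the present statement is essentially its repackaging as a projection operator.
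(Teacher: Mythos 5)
Your proposal is correct and is essentially the paper's own argument: the paper derives Proposition \ref{proj} in one sentence from the preceding proposition by observing that $g_{d+1}=n_{d+1}gn_{d+1}^{-1}=n_szs^{-1}$ with $A_p(g_p)=t_p$ given by formula (\ref{tind}), which is precisely your telescoping induction, and it likewise relies on the isomorphism (\ref{cross}) and the Remark about cancellation of the denominator for regularity. You have merely written out in full the induction and the identification with the abstract projector $\overline{P}$ of (\ref{P1}) that the paper leaves implicit.
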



\section{The Poisson structure of q-W algebras}
\label{plg}

\setcounter{equation}{0}
\setcounter{theorem}{0}

In this section we obtain two descriptions of the Poisson structure of the q-W algebras. First following \cite{S6} we recall the definition of Poisson q-W algebras. We shall need some related facts on Poisson--Lie groups and on the definition of the Poisson structure of quasi--triangular Poisson--Lie groups and their dual groups which can be found in \cite{fact,dual,RIMS}.

Let $s\in W$ be an element of the Weyl group $W$ of the pair $(\g,\h)$, $\Delta_+^s$ a system of positive roots associated to $s$, and $\h'$ the orthogonal complement in $\h$, with respect to the Killing form, to the subspace of $\h$ fixed by the natural action of $s$ on $\h$. The restriction of the natural action of $s$ on $\h$ to the subspace $\h'$ has no fixed points. Therefore one can define the Cayley transform ${1+s \over 1-s }P_{{\h'}}$ of the restriction of $s$ to ${\h'}$, where $P_{{\h'}}$ is the orthogonal projection operator onto ${{\h'}}$ in $\h$, with respect to the Killing form.

Let $r\in {\rm End}~\g$ be the endomorphism defined by
\begin{equation}\label{r}
r=P_+-P_--{1+s \over 1-s}P_{{\h'}},
\end{equation}
where $P_+,P_-$ and $P_{{\h'}}$ are the projection operators onto ${\frak n}_+^s,{\frak
n}_-^s$ and ${\frak h}'$ in
the direct sum
$$
{\frak g}={\frak n}_+^s +{\frak h}'+{\h'}^\perp + {\frak n}_-^s,
$$
and ${\h'}^\perp$ is the orthogonal complement to $\h'$ in $\h$ with respect to the Killing form.

The endomorphism $r$ satisfies the modified classical Yang-Baxter
equation
\begin{equation}
\left[ rX,rY\right] -r\left( \left[ rX,Y\right] +\left[
X,rY\right] \right) =-\left[ X,Y\right] ,\;X,Y\in {\frak g}
\label{cybe}
\end{equation}
and this ensures that
\begin{equation}
\left[ X,Y\right] _{*}=\frac 12\left( \left[ rX,Y\right] +\left[
X,rY\right] \right) ,X,Y\in {\frak g},  \label{rbr}
\end{equation}
is a Lie bracket.

Identifying the dual space $\g^*$ with $\g$ using the Killing form one can check that this commutator is dual to a cocycle on $\g$ which makes $(\g,\g^*)$ a Lie bialgebra. Note also that $r$ is skew-symmetric with respect to the Killing form.

Let $G$ be a connected semi--simple Poisson--Lie group with the
tangent Lie bialgebra $({\frak g},{\frak g}^*)$,
$G^*$ the dual connected simply--connected Poisson--Lie group.

Define operators $r_\pm \in {\rm End}\ {\frak g}$ by
\[
r_{\pm }=\frac 12\left( r\pm id\right) .
\]

The classical Yang--Baxter equation implies that $r_{\pm }$ ,
regarded as a mapping from ${\frak g}^{*}$ into ${\frak g}$, is a
Lie algebra homomorphism. Moreover, $r_{+}^{*}=-r_{-},$\ and
$r_{+}-r_{-}=id.$

One can describe the dual group $G^*$ in terms of $G$ as follows.
Put ${\frak {d}}={\frak g + {\g}}$ (direct sum of two copies).
The mapping
\begin{eqnarray}\label{imbd}
{\frak {g}}^{*}\rightarrow {\frak {d}}~~~:X\mapsto
(X_{+},~X_{-}),~~~X_{\pm }~=~r_{\pm }X
\end{eqnarray}
is a Lie algebra embedding. Thus we may identify ${\frak g^{*}}$
with a Lie subalgebra in ${\frak {d}}$.

Naturally, embedding (\ref{imbd}) gives rise to a Lie group embedding
$$
G^*\rightarrow G\times G,~~L\mapsto (L_+,L_-).
$$
We shall identify $G^*$ with the image of this embedding in
$G\times G$.

Now we explicitly describe the Poisson structures on the Poisson--Lie
group $G$ and on its dual group $G^*$.

For every group $A$ with Lie algebra $\frak a$ and any function
$\varphi \in C^\infty (A)$ we define left and right gradients $\nabla
\varphi , \nabla^{\prime} \varphi$, which are $C^\infty$-functions on $A$ with values in  ${\frak a}^{*}$, by the formulae
\begin{eqnarray}
\xi ( \nabla \varphi (x))\ =\left( \frac d{dt}\right) _{t=0}\varphi (e^{t\xi }x),  \nonumber \\
\xi ( \nabla^{\prime} \varphi (x))=\left( \frac d{dt}\right)
_{t=0}\varphi (xe^{t\xi }),~~\xi \in {\frak {a}.}  \label{grad}
\end{eqnarray}

Denote by $\langle \cdot , \cdot \rangle$ the Killing form on $\g$ and identify $\g$ with $\g^*$ using this form. Then the left and the right gradients of functions on $G$ can be regarded as $C^\infty$-functions on $G$ with values in $\g$.
They satisfy the following relations
\begin{eqnarray}
\langle X, \nabla \varphi  (g)\rangle =\left( \frac d{dt}\right) _{t=0}\varphi (e^{tX }g),  \nonumber \\
\langle X, \nabla^{\prime} \varphi (g) \rangle =\left( \frac d{dt}\right)
_{t=0}\varphi (ge^{tX }),~~X \in \g .  \label{grad1}
\end{eqnarray}
If $K\subset G$ is a Lie subgroup, $\k\subset \g$ its Lie algebra and $\overline{\k}\subset \g$ the image of $\k$ in $\g^*\simeq \g$ under the identification $\g\simeq \g^*$ induced by the Killing form then the left and the right gradients of functions on $K$ can be regarded as $C^\infty$-functions on $K$ with values in $\overline{\k}\subset \g$.

The canonical Poisson bracket on the Poisson--Lie group $G$ with
the tangent bialgebra $(\g, \g^*)$ has the form:
\begin{equation}
\{ \varphi ,\psi \}~~=~~\frac 12 \left\langle r \nabla
\varphi,\nabla \psi \right\rangle -~\frac 12\left\langle
r\nabla^{\prime} \varphi , \nabla^{\prime} \psi \right\rangle.
\label{pbr}
\end{equation}

The canonical Poisson bracket on the dual Poisson--Lie group $G^*$
can be described in terms of the original group $G$ and the
classical r--matrix $r$.

We shall use gradients of a function $\varphi\in C^\infty\left( G^*\right) $ with respect to the $G^{*}$ group structure,
\begin{eqnarray}
\langle X, \nabla \varphi  (L_{+},L_{-})\rangle = \left( \frac d{ds}\right) _{s=0}\varphi (e^{sX_{+}}L_{+},e^{sX_{-}}L_{-}), \nonumber \\
\langle X, \nabla^{\prime} \varphi (L_{+},L_{-}) \rangle =\left( \frac d{ds}\right) _{s=0}\varphi  (L_{+}e^{sX_{+}},L_{-}e^{sX_{-}}),X \in \frak g.
\end{eqnarray}

\begin{proposition}
Let $(L_+,L_-)\in G^*$. Then for $f,g \in C^\infty(G^*)$ one has
\begin{equation}\label{Pbracket}
\left\{ f ,g \right\} _{G^*} (L_+,L_-) =
\left\langle (AdL_{+} -AdL_{-})\nabla^{\prime} f
,\nabla g \right\rangle
-\left\langle \nabla f ,(AdL_{+}
-AdL_{-}) \nabla^{\prime} g \right\rangle .
\end{equation}

Denote by $G_*$ the group $G$ equipped with the following Poisson
bracket
\begin{equation}
\left\{ \varphi ,\psi \right\} _* =-\left\langle r \nabla
\varphi,\nabla \psi \right\rangle -\left\langle r \nabla^{\prime
}\varphi,\nabla^{\prime }\psi\right\rangle +2\left\langle r_{-}
\nabla^{\prime }\varphi,\nabla \psi\right\rangle +2\left\langle
r_{+} \nabla\varphi,\nabla^{\prime }\psi\right\rangle ,
\label{tau}
\end{equation}
where all the gradients are taken with respect to the original
group structure on $G$.

Then the map $q:G^* \rightarrow G_*$ defined by
\begin{equation}\label{q*}
q(L_+,L_-)=L_-L_+^{-1}
\end{equation}
is a Poisson mapping and the image of $q$ is a dense open subset
in $G_*$.

There exists a unique left Poisson group action
$$
G\times G^*\rightarrow G^*,~~(g,(L_+,L_-))\mapsto g\circ
(L_+,L_-),
$$
i.e. the action map is Poisson assuming that $G\times G^*$ is equipped with the product Poisson structure, and  if  $q:G^* \rightarrow G_*$ is the map defined by formula
(\ref{q*}) then
$$
q(g\circ (L_+,L_-))=gL_-L_+^{-1}g^{-1}.
$$
\end{proposition}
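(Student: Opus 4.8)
The plan is to handle the four assertions separately, all of them resting on the realization $G^*\hookrightarrow G\times G$, $L\mapsto(L_+,L_-)$ of (\ref{imbd}) and on the identities $r_+-r_-=\mathrm{id}$, $r_+^*=-r_-$ (equivalently $r^*=-r$).

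First I would establish the bracket (\ref{Pbracket}). The tangent Lie bialgebra of $G^*$ is $(\g^*,\g)$, so its canonical Poisson--Lie structure is the Sklyanin bracket attached to the cobracket of $\g^*$, which is the Lie bracket of $\g$; equivalently it is the bracket induced on the Poisson subgroup $G^*$ of the double $\mathfrak d=\g\oplus\g$. Writing $f\in C^\infty(G^*)$ through the embedding and differentiating along the curves $(e^{sX_+}L_+,e^{sX_-}L_-)$ and $(L_+e^{sX_+},L_-e^{sX_-})$ as prescribed in the definition of the $G^*$--gradients, one expresses everything through the $\g$--valued gradients (after the Killing--form identification $\g^*\simeq\g$). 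Converting between left and right gradients on each of the two factors introduces $\mathrm{Ad}L_+$ and $\mathrm{Ad}L_-$, and the mixed terms assemble into the operator $\mathrm{Ad}L_+-\mathrm{Ad}L_-$, giving exactly (\ref{Pbracket}).

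The heart of the proposition is that $q$ is Poisson, i.e. $\{f\circ q,\psi\circ q\}_{G^*}=\{f,\psi\}_*\circ q$ for $f,\psi\in C^\infty(G_*)$. I would compute the $G^*$--gradients of a pullback $f\circ q$ at $(L_+,L_-)$ in terms of the gradients of $f$ at the point $g=L_-L_+^{-1}$ by the chain rule. Differentiating $q(e^{sX_+}L_+,e^{sX_-}L_-)=e^{sX_-}ge^{-sX_+}$ and using $r_-^*=-r_+$, $r_+^*=-r_-$ gives $\nabla(f\circ q)=-r_+\nabla f+r_-\nabla'f$. Differentiating $q(L_+e^{sX_+},L_-e^{sX_-})=L_-e^{sX_-}e^{-sX_+}L_+^{-1}$ and using $X_+-X_-=X$ gives the two equivalent expressions $\nabla'(f\circ q)=-\mathrm{Ad}L_-^{-1}\nabla f=-\mathrm{Ad}L_+^{-1}\nabla'f$, where on the right the gradients of $f$ are taken at $g$; the equality of the two forms is just the relation $\nabla f=\mathrm{Ad}g\,\nabla'f$ together with $\mathrm{Ad}g=\mathrm{Ad}L_-\,\mathrm{Ad}L_+^{-1}$. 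The point of carrying both forms is the clean identity
\[
(\mathrm{Ad}L_+-\mathrm{Ad}L_-)\,\nabla'(f\circ q)=\nabla f-\nabla'f,
\]
obtained by feeding the $\mathrm{Ad}L_+^{-1}$ form into the $\mathrm{Ad}L_+$ slot and the $\mathrm{Ad}L_-^{-1}$ form into the $\mathrm{Ad}L_-$ slot. Substituting this and the formula for $\nabla(f\circ q)$ into (\ref{Pbracket}), every $\mathrm{Ad}L_\pm$ disappears and one is left with a combination of $\langle r_\pm\nabla f,\nabla\psi\rangle$--type terms which, after moving $r_\pm$ across $\langle\cdot,\cdot\rangle$ by $r_\pm^*=-r_\mp$ and using $r_++r_-=r$, collapses to the right--hand side of (\ref{tau}). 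This matching is where essentially all of the work lies: the main obstacle is the careful adjoint and sign bookkeeping, and one must respect the sign conventions built into (\ref{Pbracket}) and (\ref{tau}) for the two brackets to coincide.

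For the image of $q$ I would use the factorized description of $G^*$: with $L_\pm$ lying in the opposite Borel subgroups $B_\pm^s$ and having inverse Cartan parts, the product $L_-L_+^{-1}$ sweeps out the big cell $N_-^sHN_+^s$ of the Gauss decomposition of $G$, which is dense and open; since $q$ is \'etale (a finite cover) onto this cell, its image is dense and open in $G_*$. Finally, for the Poisson action I would realize it inside the double $D=G\times G$, where $G$ sits diagonally and $G^*$ as $(L_+,L_-)$: near the identity the product $(g,g)(L_+,L_-)=(gL_+,gL_-)$ refactorizes uniquely as $(L_+',L_-')(g',g')$ with $(L_+',L_-')\in G^*$ and $g'\in G$, and one sets $g\circ(L_+,L_-)=(L_+',L_-')$. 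Existence and uniqueness come from the local bijectivity of the factorization map, and the Poisson property is the standard fact that the dressing action of $G$ on $G^*$ determined by the double is a Poisson group action (\cite{dual,RIMS}); uniqueness as a Poisson action compatible with $q$ follows because the dressing vector fields are fixed by the Lie bialgebra structure. The $q$--formula is then immediate: from $L_\pm'=gL_\pm(g')^{-1}$ one gets $q(g\circ(L_+,L_-))=L_-'(L_+')^{-1}=gL_-L_+^{-1}g^{-1}=g\,q(L_+,L_-)\,g^{-1}$, which is conjugation by $g$, as claimed.
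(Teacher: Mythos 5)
The paper gives no proof of this proposition at all: it is imported verbatim from \cite{fact,dual,RIMS}, so there is no in-paper argument to compare against, and your reconstruction follows exactly the standard route of those references (realization of $G^*$ in the double $G\times G$, chain rule through $q$, refactorization for the dressing action). Your intermediate computations check out: $\nabla(f\circ q)=-r_+\nabla f+r_-\nabla'f$, $\nabla'(f\circ q)=-\mathrm{Ad}L_-^{-1}\nabla f=-\mathrm{Ad}L_+^{-1}\nabla'f$, and the identity $(\mathrm{Ad}L_+-\mathrm{Ad}L_-)\nabla'(f\circ q)=\nabla f-\nabla'f$ is indeed the right device for making the $\mathrm{Ad}L_\pm$ factors disappear. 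The treatment of the image of $q$ via the big cell $N_-^sHN_+^s$ and of the dressing action via factorization in the double are also the standard and correct arguments.

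The one step you leave as ``sign bookkeeping'' is the only step that can actually fail, and as written it does not close. Substituting your (correct) gradient formulas into (\ref{Pbracket}) exactly as printed and using $r_\pm^*=-r_\mp$, $r_++r_-=r$ yields
$\{f\circ q,\psi\circ q\}_{G^*}=\langle r\nabla f,\nabla\psi\rangle+\langle r\nabla'f,\nabla'\psi\rangle-2\langle r_+\nabla f,\nabla'\psi\rangle-2\langle r_-\nabla'f,\nabla\psi\rangle$,
which is the \emph{negative} of the right-hand side of (\ref{tau}). So with the formulas as literally stated, $q$ comes out anti-Poisson; to make the proposition true one must flip a convention somewhere (the overall sign of (\ref{Pbracket}), the order $L_-L_+^{-1}$ versus $L_+L_-^{-1}$ in (\ref{q*}), or the sign of (\ref{tau})), and different choices are made in the cited sources. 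Since the whole content of this part of the proposition is that $q$ is Poisson rather than anti-Poisson, you should carry the final expansion out explicitly and state which convention you are using, rather than asserting that the terms ``collapse to the right-hand side of (\ref{tau})''.
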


Now we describe the Poisson structure of Poisson q-W--algebras.
First restrict the action of $G$ on $G_*$ by conjugations to the subgroup $N\subset G$.
Let $s\in G$ be a representative of the element $s\in W$.
By Proposition \ref{prop2} the quotient $NZs^{-1}N/N\simeq s^{-1}ZN_{s^{-1}}$ is a subspace of the quotient $G_*/N$

The following proposition summarizes the results of Section 5 in \cite{S6}.
\begin{proposition}\label{Reduce}
Let $G$ be a complex connected semi--simple algebraic group, $\g$ its Lie algebra, $\h\subset \g$ a Cartan subalgebra in $\g$.
Let $s\in W$ be an element of the Weyl group of the pair $(\g,\h)$, $\p$ the parabolic subalgebra  associated to $s$, $\n$ the nilradical of $\p$, $N\subset G$ the subgroup corresponding to $\n$, $r$ the classical r-matrix (\ref{r}) on $\g$. Equip $G$ with Poisson bracket (\ref{tau}) and denote the corresponding Poisson manifold by $G_*$. Restrict the conjugation action of $G$ on $G_*$ to the subgroup $N$. Then $C^\infty(G_*)^N$ is a Poisson subalgebra in $C^\infty(G_*)$ so that
the Poisson structure on $G_*$ induces a reduced Poisson structure on $G_*/N$, Hamiltonian vector fields of $N$--invariant functions on $G_*$ are tangent to $NZs^{-1}N$ and $N_sZs^{-1}\simeq NZs^{-1}N/N$ is a smooth Poisson submanifold of $G_*/N$.

Let $\pi: NZs^{-1}N \rightarrow N_sZs^{-1}\simeq NZs^{-1}N/N$ be the canonical projection. Then for any (locally defined) smooth functions $\varphi,\psi$ on $N_sZs^{-1}$, and any (locally defined) smooth extensions
$\overline{\varphi},\overline{\psi}$ of $\varphi \circ \pi, \psi \circ \pi$ to $G_*$ we have
\begin{equation}\label{Pbr}
\{\varphi ,\psi \}\circ \pi=\{\overline{\varphi},\overline{\psi}\}_{G_*} \circ i = \left(P_{G_*},d\overline{\varphi}\wedge d\overline{\psi}\right)\circ i,
\end{equation}
where $P_{G_*}$ is the Poisson tensor of $G_*$, $\left( \cdot,\cdot \right)$ is the canonical pairing and $i:NZs^{-1}N \rightarrow G_*$ is the inclusion.
\end{proposition}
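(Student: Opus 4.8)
The plan is to realize the statement as an instance of Poisson reduction by the conjugation action of $N$, and to obtain formula (\ref{Pbr}) from the general description of a reduced bracket together with the explicit geometry of $\mathcal{C}=NZs^{-1}N$ supplied by Proposition \ref{prop2}. First I would record that the conjugation action of $G$ on $G_*$ is a Poisson action: this follows from the previous proposition, since $q:G^*\rightarrow G_*$ is a Poisson map with dense image which intertwines the left Poisson action of $G$ on $G^*$ with conjugation on $G_*$, because $q(g\circ(L_+,L_-))=g\,q(L_+,L_-)\,g^{-1}$. To pass to invariants under $N$, the key structural fact is that the annihilator of $\n$ with respect to the Killing form is the parabolic $\p=\l+\n$, and I would check that $\p$, viewed inside $\g^*$ under the identification $\g\simeq\g^*$, is a Lie subalgebra for the bracket (\ref{rbr}); concretely $[\p,\p]_*=\frac12([r\p,\p]+[\p,r\p])\subset\p$, which one verifies from the explicit $r$ in (\ref{r}) using $\n\subset\n_+^s$ and the decomposition $\g=\opn+\l+\n$. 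This says that $N$ is a coisotropic admissible subgroup, so by the standard reduction theorem for Poisson actions $C^\infty(G_*)^N$ is a Poisson subalgebra and $G_*/N$ carries a reduced bracket on its smooth locus.

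Next I would prove tangency of Hamiltonian vector fields of $N$--invariant functions to $\mathcal{C}$. Writing a tangent vector at $g$ in the form $a\cdot g+g\cdot b$ (the derivative at $t=0$ of $e^{ta}ge^{tb}$) and reading off, with $f$ in the first slot, the coefficients of $\nabla\psi$ and $\nabla^{\prime}\psi$ in (\ref{tau}), the Hamiltonian field of $f$ is
\begin{equation*}
X_f\big|_g=(2r_-\nabla^{\prime}f-r\nabla f)\cdot g+g\cdot(2r_+\nabla f-r\nabla^{\prime}f).
\end{equation*}
Using $2r_\pm=r\pm\mathrm{id}$ and setting $\delta=\nabla f-\nabla^{\prime}f$, this collapses to $X_f|_g=-(r\delta+\nabla^{\prime}f)\cdot g+g\cdot(r\delta+\nabla f)$, and after moving the right factor to the left via $g\cdot u=(\mathrm{Ad}_g u)\cdot g$ it becomes $X_f|_g=\big((\mathrm{Ad}_g-\mathrm{id})(r\delta+\nabla f)+\delta\big)\cdot g$. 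The invariance of $f$ is exactly the condition $\delta\in\p$. On the other hand, for $g=n_1zs^{-1}n_2\in\mathcal{C}$ one has $T_g\mathcal{C}=(\n+\mathrm{Ad}_{n_1}\z+\mathrm{Ad}_g\n)\cdot g$, where $\z=\mathrm{Lie}(Z)$. Thus tangency reduces to the purely Lie--algebraic membership $(\mathrm{Ad}_g-\mathrm{id})(r\delta+\nabla f)+\delta\in\n+\mathrm{Ad}_{n_1}\z+\mathrm{Ad}_g\n$, which I would establish by decomposing $\delta$ and $\nabla f$ along $\g=\opn+\l+\n$ and using the explicit action of $r$, the fact that the representative $s$ permutes root spaces according to $s\in W$, and that $Z$ is generated by the semisimple part of $L$ and the centralizer of $s$ in $H$.

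With tangency in hand the remaining statements are formal. By Proposition \ref{prop2} the conjugation map $N\times N_sZs^{-1}\rightarrow\mathcal{C}$ is an isomorphism of varieties, so the $N$--action on $\mathcal{C}$ is free with global cross--section $N_sZs^{-1}$; hence $\mathcal{C}/N\simeq N_sZs^{-1}$ is smooth, and since $\mathcal{C}$ is $N$--invariant and the Hamiltonian fields of invariants are tangent to it, it descends to a Poisson submanifold of $G_*/N$. Finally, to prove (\ref{Pbr}) I would note that a function $\varphi$ on $N_sZs^{-1}$ pulls back along $\pi$ to an $N$--invariant function on $\mathcal{C}$, which, for instance using the projection operator $\Pi$ of Proposition \ref{proj}, extends to an $N$--invariant $\overline{\varphi}$ on $G_*$; then $\{\overline{\varphi},\overline{\psi}\}_{G_*}$ is again $N$--invariant by the first step, its restriction to $\mathcal{C}$ descends through $\pi$, and the value $(P_{G_*},d\overline{\varphi}\wedge d\overline{\psi})\circ i$ is independent of the chosen extensions precisely because $X_{\overline{\psi}}$ is tangent to $\mathcal{C}$, so altering $\overline{\varphi}$ by a function vanishing on $\mathcal{C}$ does not change the pairing along $i$. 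This is the content of (\ref{Pbr}).

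The main obstacle is the tangency step: verifying the Lie--algebraic membership above for every $g\in\mathcal{C}$ and every $\delta\in\p$. This is where the specific choice of $\Delta_+^s$, of the r--matrix (\ref{r}) adapted to $s$, and of the group $Z$ enter essentially, and it is the part of the argument that genuinely uses the geometry of the slice rather than general Poisson--reduction formalism; the verification that $\p=\n^\perp$ is a subalgebra of $\g^*$ is a smaller but logically prior computation of the same flavour.
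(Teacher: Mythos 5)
First, a point of comparison: the paper itself contains no proof of this proposition --- it is explicitly presented as a summary of Section 5 of \cite{S6} --- so your argument can only be measured against the general reduction scheme it is meant to instantiate. Within that scheme your outline is the right one, and the computations you actually perform are correct: with respect to (\ref{tau}) the Hamiltonian field of $f$ is indeed $X_f|_g=\bigl((\mathrm{Ad}_g-\mathrm{id})(r\delta+\nabla f)+\delta\bigr)\cdot g$ with $\delta=\nabla f-\nabla^{\prime}f$, the condition of $N$--invariance is exactly $\delta\in\p=\n^{\perp}$, and your description of $T_g(NZs^{-1}N)$ is accurate. (A simplification you miss: since $(\mathrm{Ad}_g-\mathrm{id})\nabla f=\mathrm{Ad}_g\delta$, the field depends on $f$ only through $\delta$, namely $X_f|_g=\bigl((\mathrm{Ad}_g-\mathrm{id})r\delta+(\mathrm{Ad}_g+\mathrm{id})\delta\bigr)\cdot g$, which makes the tangency condition a statement purely about $\delta\in\p\cap(\mathrm{id}-\mathrm{Ad}_{g^{-1}})\g$.)

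The proposal nevertheless has genuine gaps, and they sit exactly at the load--bearing points. First, the admissibility of $N$ --- that $\p=\n^{\perp}$ is a Lie subalgebra of $(\g,[\cdot,\cdot]_*)$ --- is announced but not verified; it does hold, and easily (by (\ref{r}) one has $r\p\subset\p$, since $r$ acts as $\pm\mathrm{id}$ on $\n_{\pm}^s$ and maps $\h$ into $\h$, whence $[\p,\p]_*\subset[\p,\p]\subset\p$), but without this check the claim that $C^\infty(G_*)^N$ is a Poisson subalgebra under a Poisson (not Hamiltonian) action of the Poisson--Lie group $G$ has no basis. Second, the tangency membership, which you yourself single out as the main obstacle, is not carried out at all; this is the actual content of the proposition, the only place where the choice of $\Delta_+^s$, the Cayley--transform term of $r$ and the definition of $Z$ enter, and it is precisely what the citation of \cite{S6} is standing in for. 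Third, your well--definedness argument for (\ref{Pbr}) covers only $N$--invariant extensions, whereas the statement allows arbitrary ones: for general $\overline{\varphi},\overline{\psi}$ one must also know that the bracket of two functions vanishing on $\mathcal{C}=NZs^{-1}N$ again vanishes on $\mathcal{C}$, i.e.\ that $\mathcal{C}$ is coisotropic in $G_*$, and this is nowhere established in your text even though it is essential for the first equality in (\ref{Pbr}) to be independent of the extensions. As written the proposal is a correct plan whose decisive verifications are all deferred.
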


Note that for any $X\in \g$ and any regular function $\varphi$ on $G$ the functions $\langle\nabla \varphi,X \rangle$ and  $\langle\nabla ^{\prime }\varphi,X \rangle$ are regular. Therefore the space of regular functions on $G$ is closed with respect to Poisson bracket (\ref{tau}).
Since by Proposition \ref{prop2} the projection $NZs^{-1}N\rightarrow N_sZs^{-1}$ induced by the map $G_*\rightarrow G_*/N$ is a morphism of varieties, Proposition \ref{Reduce} implies that the algebra of regular functions on $N_sZs^{-1}$ is closed under the reduced Poisson bracket defined on $N_sZs^{-1}$ in the previous theorem. This Poisson algebra is called the Poisson q-W--algebra associated to the Weyl group element $s\in W$, or, more precisely, to the conjugacy class of $s\in W$.

The reduced Poisson structure on the slice $N_sZs^{-1}$ is explicitly described in the following proposition.
\begin{proposition}\label{pred1}
Identify $N_sZs^{-1}$ with the subgroup $N_sZ\subset G$ using the right translation by $s$. Let $\n_s$ be the Lie algebra of $N_s$, $\z$ the Lie algebra of $Z$ so that as a linear space the Lie algebra of $N_sZ$ is $\n_s+\z \subset \g$. Then the reduced Poisson bracket on $N_sZs^{-1}\simeq N_sZ$ is given by
\begin{eqnarray}\label{tau1}
\left\{ \varphi ,\psi \right\}(m) =-\left\langle r \nabla
\varphi,\nabla \psi \right\rangle -\left\langle r {\rm Ad}(sm^{-1})\nabla\varphi,{\rm Ad}(sm^{-1})\nabla\psi\right\rangle + \\
+2\left\langle r_{-}
{\rm Ad}(sm^{-1})\nabla\varphi,\nabla \psi\right\rangle +2\left\langle
r_{+} \nabla\varphi,{\rm Ad}(sm^{-1})\nabla\psi\right\rangle , \nonumber
\end{eqnarray}
where $\varphi, \psi\in C^\infty(N_sZ)$, $m\in N_sZ$, and the left gradients $\nabla
\varphi,\nabla \psi$ are regarded as $C^\infty$-functions on $N_sZ$ with values in $\overline{\n_s+\z}\subset \g$.
\end{proposition}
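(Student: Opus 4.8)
The plan is to derive (\ref{tau1}) directly from the general reduction formula (\ref{Pbr}) of Proposition \ref{Reduce} together with the explicit Poisson bracket (\ref{tau}) on $G_*$. Throughout I would use the identification $N_sZs^{-1}\simeq N_sZ$ by right translation by $s$, so that a point $m\in N_sZ$ corresponds to $x=ms^{-1}\in N_sZs^{-1}$, and transport $\varphi,\psi$ to the slice by $\tilde\varphi(x)=\varphi(xs)$. By Proposition \ref{prop2} the slice $N_sZs^{-1}$ is a cross--section for the conjugation action of $N$ on $NZs^{-1}N$, so I would extend $\varphi\circ\pi,\psi\circ\pi$ to $N$--invariant functions $\overline\varphi,\overline\psi$ on $G_*$; by Proposition \ref{Reduce} the reduced bracket at $x$ is then $\{\overline\varphi,\overline\psi\}_{G_*}(x)$, which I evaluate using (\ref{tau}).

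The first key step is to eliminate the right gradients in (\ref{tau}) in favour of the left ones. Using the ${\rm Ad}$--invariance of the Killing form one has the standard identity $\nabla^{\prime}\overline\varphi(x)={\rm Ad}(x^{-1})\nabla\overline\varphi(x)$, and since $x^{-1}=(ms^{-1})^{-1}=sm^{-1}$ this produces exactly the operators ${\rm Ad}(sm^{-1})$ occurring in (\ref{tau1}). Substituting $\nabla^{\prime}\overline\varphi(x)={\rm Ad}(sm^{-1})\nabla\overline\varphi(x)$ and the analogue for $\psi$ into (\ref{tau}) turns it, term by term, into the four summands of (\ref{tau1}), with $\nabla\overline\varphi(x),\nabla\overline\psi(x)$ in place of $\nabla\varphi(m),\nabla\psi(m)$.

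The remaining and most delicate step is to show that in these four pairings the full $\g$--valued left gradient $\nabla\overline\varphi(x)$ of the extension may be replaced by the intrinsic left gradient $\nabla\varphi(m)$ of $\varphi$ on the subgroup $N_sZ$, valued in $\overline{\n_s+\z}$. For directions tangent to the slice, i.e. $Y\in\n_s+\z$, the right--translation relation gives $\langle Y,\nabla\overline\varphi(x)\rangle=\frac{d}{dt}\big|_{0}\varphi(e^{tY}m)=\langle Y,\nabla\varphi(m)\rangle$, so the two gradients agree on $\n_s+\z$; along the $N$--orbit directions $\{\xi-{\rm Ad}(x)\xi:\xi\in\n\}$ the derivative of the $N$--invariant $\overline\varphi$ vanishes, which is the same condition as $\nabla\overline\varphi-\nabla^{\prime}\overline\varphi\in\p$; and the components transverse to $NZs^{-1}N$ are irrelevant because, by Proposition \ref{Reduce}, the Hamiltonian vector fields of $N$--invariant functions are tangent to $NZs^{-1}N$ and the bracket (\ref{Pbr}) is independent of the chosen extension. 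I expect the main obstacle to be precisely the verification that the components of $\nabla\overline\varphi(x)$ lying outside $\overline{\n_s+\z}$ do not contribute to any of the four pairings in (\ref{tau}): this is where one must combine the cross--section decomposition of Proposition \ref{prop2} with the explicit form (\ref{r}) of $r$ and the relation $\nabla\overline\varphi-\nabla^{\prime}\overline\varphi\in\p$ forced by $N$--invariance. Once this cancellation is established, substituting $\nabla\varphi(m),\nabla\psi(m)$ for $\nabla\overline\varphi(x),\nabla\overline\psi(x)$ yields (\ref{tau1}) verbatim, completing the proof.
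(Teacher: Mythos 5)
Your overall route is the same as the paper's up to the decisive point: start from (\ref{Pbr}), rewrite (\ref{tau}) purely in terms of left gradients via $\nabla^{\prime}\overline\varphi(x)={\rm Ad}(x^{-1})\nabla\overline\varphi(x)$ with $x=ms^{-1}$ (this is exactly the paper's intermediate formula (\ref{tau2})), and then replace the gradients of the extensions by the intrinsic gradients $\nabla\varphi(m),\nabla\psi(m)\in\overline{\n_s+\z}$. The problem is that this last replacement is the entire content of the proposition, and you do not prove it: you explicitly flag it as the ``main obstacle'' and then proceed conditionally on the cancellation being established. Worse, the route you indicate for closing it --- a term-by-term verification that the components of $\nabla\overline\varphi(x)$ outside $\overline{\n_s+\z}$ drop out of each of the four pairings, using the explicit form (\ref{r}) of $r$ together with $\nabla\overline\varphi-\nabla^{\prime}\overline\varphi\in\p$ --- is not how the difficulty is resolved, and there is no reason to expect the discrepancy $\nabla\overline\varphi(x)-\nabla\varphi(m)$ to interact term by term with $P_\pm$ and the Cayley transform in a way that cancels.

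The paper closes the gap by a structural argument that never touches the individual $r$-matrix terms: it replaces the two differentials \emph{one at a time}. Write the right-hand side of (\ref{Pbr}) as $\left(P_{G_*}(d\overline{\varphi}),d\overline{\psi}\right)(ms^{-1})$. Taking $\overline\varphi$ to be $N$-invariant, Proposition \ref{Reduce} guarantees that the Hamiltonian vector field $P_{G_*}(d\overline{\varphi})$ is tangent to $NZs^{-1}N$, so this pairing depends only on the restriction $\psi^*=\overline\psi\circ i=\psi\circ\pi$, whose differential at $ms^{-1}$ is, under the right trivialization, $\nabla\psi(m)\in\overline{\n_s+\z}$; the skew-symmetry of the Poisson bracket then allows the identical replacement for $\varphi$, yielding $\left(P_{G_*}(ms^{-1}),d\varphi(m)\wedge d\psi(m)\right)$. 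Only after both arguments have been moved into $\overline{\n_s+\z}$ is formula (\ref{tau2}) substituted, which gives (\ref{tau1}) immediately. This use of the tangency of Hamiltonian vector fields for one argument at a time, combined with skew-symmetry, is the missing idea in your proposal.
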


\begin{proof}
We shall use formula (\ref{Pbr}) to prove this proposition. Firstly we use the right trivialization of the tangent and of the cotangent bundle to $G$ and identify them using the Killing form so that differentials of functions on $G$ become their left gradients which can be regarded as functions on $G$ with values in $\g$. Observing that the right gradient of a function $\varphi$ on $G$ is related to its left gradient by
$$
\nabla^{\prime}\varphi (g)={\rm Ad}~g^{-1}(\nabla\varphi(g))
$$
we can rewrite formula (\ref{tau}) for the Poisson bracket on $G_*$ in the form
\begin{eqnarray}\label{tau2}
\left\{ \varphi ,\psi \right\}(g) =-\left\langle r \nabla
\varphi,\nabla \psi \right\rangle -\left\langle r {\rm Ad}(g^{-1})\nabla\varphi,{\rm Ad}(g^{-1})\nabla\psi\right\rangle + \\
+2\left\langle r_{-}
{\rm Ad}(g^{-1})\nabla\varphi,\nabla \psi\right\rangle +2\left\langle
r_{+} \nabla\varphi,{\rm Ad}(g^{-1})\nabla\psi\right\rangle , \nonumber
\end{eqnarray}

Now we calculate the reduced Poisson bracket of two functions $\varphi, \psi\in C^\infty(N_sZ)$ using formulas (\ref{Pbr}) and (\ref{tau2}). We evaluate the left hand side and the right hand side of (\ref{Pbr}) at point $ms^{-1}$, $m\in N_sZ$. Observe that the reduced space $N_sZ\simeq N_sZs^{-1}$ can be regarded as a submanifold of $NZs^{-1}N$ therefore the right hand side of (\ref{Pbr}) evaluated at point $ms^{-1}$ is equal to the reduced Poisson bracket of $\varphi$ and  $\psi$ at point $m$.

Now we justify that the right hand side of (\ref{Pbr}) is equal to the right hand side of (\ref{tau1}).
Denote $\varphi^*=\varphi \circ \pi, \psi^*=\psi \circ \pi$ and let $P_{G_*}(d\overline{\varphi})$ be the Hamiltonian vector field corresponding to the function $\overline{\varphi}$. Without loss of generality we can assume that the extension $\overline{\varphi}$ is a (locally defined) $N$--invariant function on $G_*$. Since by Proposition \ref{Reduce} Hamiltonian vector fields of $N$--invariant functions on $G_*$ are tangent to $NZs^{-1}N$ the right hand side of (\ref{Pbr}) can be rewritten as
$$
\left(P_{G_*},d\overline{\varphi}\wedge d\overline{\psi}\right)\circ i(ms^{-1})=\left(P_{G_*}(d\overline{\varphi}), d\overline{\psi}\right)(ms^{-1})=\left(P_{G_*}(d\overline{\varphi}), d{\psi^*}\right)(ms^{-1}).
$$

Recall that we use the right trivialization of the tangent bundle to $G$ and to $N_sZ$ and identify them with the corresponding cotangent bundles. The first identification also induces identifications of the tangent and of the cotangent bundles to $NZs^{-1}N$ with a subbundle of the tangent bundle to $G$, and of the tangent bundle to $N_sZs^{-1}$ with a subbundle of the tangent bundle to $NZs^{-1}N$.
Since the function $\psi^*$ is the $N$--invariant function on $NZs^{-1}N$ the restriction of which to $N_sZs^{-1}\simeq N_sZ$ is equal to $\psi$, under these identifications $d{\psi^*}$ at the point $ms^{-1}\in N_sZs^{-1}\simeq N_sZ$ is equal to the differential of $\psi$,  so $d{\psi^*}(ms^{-1})=d\psi(m)=\nabla \psi(m)\in \overline{\n_s+\z}\subset \g$ and $P_{G_*}(d\overline{\varphi})(ms^{-1})\in \g$.
This yields
$$
\left(P_{G_*},d\overline{\varphi}\wedge d\overline{\psi}\right)\circ i(ms^{-1})= \left(P_{G_*}(d\overline{\varphi})(ms^{-1}), d\psi(m)\right),
$$
where the pairing in the right hand side of the last formula is induced by the Killing form.

Finally, by the skew--symmetry of the Poisson bracket
\begin{equation}\label{lst}
\left(P_{G_*},d\overline{\varphi}\wedge d\overline{\psi}\right)\circ i(ms^{-1})=\left(P_{G_*}(ms^{-1}),d\varphi(m)\wedge d\psi(m)\right),
\end{equation}
where $d\varphi(m), d\psi(m)\in \overline{\n_s+\z}\subset \g$, $P_{G_*}(ms^{-1})\in \g\wedge \g$ and the pairing in the right hand side of the last formula is induced by the Killing form. Now recalling that $d\varphi(m)=\nabla \varphi(m), d\psi(m)=\nabla \psi(m)$ and using (\ref{tau2}) in the right hand side of formula (\ref{lst}) we arrive at (\ref{tau1}).

\end{proof}

From Propositions \ref{proj} and \ref{Reduce} we can immediately deduce another description of the reduced Poisson bracket on $N_sZs^{-1}$.
\begin{proposition}\label{pred2}
Assume that $G$ is simply connected.
Let $\overline{\varphi},\overline{\psi}$ be two regular functions on $G_*$ the restrictions of which to $N_sZs^{-1}\subset G_*$ are equal to $\varphi,\psi$, respectively.  Then the reduced Poisson bracket $\{\varphi ,\psi \}$ of $\varphi$ and $\psi$ satisfies the following identity
\begin{equation}
\{\varphi ,\psi \}(ms^{-1})=\{\Pi\overline{\varphi},\Pi\overline{\psi}\}_{G_*}(ms^{-1}), m\in N_sZ.
\end{equation}
\end{proposition}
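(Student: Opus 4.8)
The plan is to recognize $\Pi\overline{\varphi}$ and $\Pi\overline{\psi}$ as $N$-invariant representatives of $\varphi$ and $\psi$ along the level surface $\mathcal{C}=NZs^{-1}N$, and then to feed them into the reduction formula (\ref{Pbr}) of Proposition \ref{Reduce}. First I would record the two structural properties of $\Pi$ that make this work. By Proposition \ref{proj} the restriction of $\Pi\overline{\varphi}$ to $\mathcal{C}$ lies in $\mathbb{C}[NZs^{-1}N]^N$, so it is $N$-invariant along $\mathcal{C}$. Moreover $\Pi$ is the projection operator attached, in the sense of (\ref{P}), to the cross-section isomorphism (\ref{cross}): at a point $y=ms^{-1}$ of the slice $\Sigma=N_sZs^{-1}$ the distinguished element $g(y)\in N$ is trivial, since the coordinates $t_p$ of the preceding proposition all vanish on $\Sigma$, whence $(\Pi\overline{\varphi})(ms^{-1})=\overline{\varphi}(ms^{-1})=\varphi(ms^{-1})$. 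Combining the two facts, $\Pi\overline{\varphi}$ agrees with $\varphi$ on $\Sigma$ and is $N$-invariant on $\mathcal{C}=N\cdot\Sigma$, so that $\Pi\overline{\varphi}|_{\mathcal{C}}=\varphi\circ\pi$, where $\pi\colon\mathcal{C}\to\Sigma$ is the canonical projection; likewise $\Pi\overline{\psi}|_{\mathcal{C}}=\psi\circ\pi$.

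Next I would invoke Proposition \ref{Reduce}. Choose genuine $N$-invariant smooth extensions $F_\varphi,F_\psi$ of $\varphi\circ\pi,\psi\circ\pi$ to a neighbourhood of $\Sigma$ in $G_*$; these exist because the conjugation action of $N$ on $\mathcal{C}$ is free with slice $\Sigma$. Formula (\ref{Pbr}) then yields $\{\varphi,\psi\}(ms^{-1})=\{F_\varphi,F_\psi\}_{G_*}(ms^{-1})$. It therefore remains to show that replacing $F_\varphi,F_\psi$ by the rational functions $\Pi\overline{\varphi},\Pi\overline{\psi}$ does not change the value of the $G_*$-bracket at $ms^{-1}\in\mathcal{C}$. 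I would write $\Pi\overline{\varphi}=F_\varphi+h$ and $\Pi\overline{\psi}=F_\psi+k$; by the previous paragraph $h$ and $k$ vanish identically on $\mathcal{C}$, so their differentials at $ms^{-1}$ are conormal to $\mathcal{C}$. Expanding $\{\Pi\overline{\varphi},\Pi\overline{\psi}\}_{G_*}$ bilinearly produces, beyond $\{F_\varphi,F_\psi\}_{G_*}$, the three cross terms $\{h,F_\psi\}_{G_*}$, $\{F_\varphi,k\}_{G_*}$ and $\{h,k\}_{G_*}$, all evaluated at $ms^{-1}$.

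The heart of the argument is that each of these cross terms vanishes at $ms^{-1}$. For $\{h,F_\psi\}_{G_*}$ and $\{F_\varphi,k\}_{G_*}$ this holds because, by Proposition \ref{Reduce}, the Hamiltonian vector fields of the honestly $N$-invariant functions $F_\varphi,F_\psi$ are tangent to $\mathcal{C}$, whereas $dh$ and $dk$ are conormal at $ms^{-1}$; the value $\{h,F_\psi\}_{G_*}(ms^{-1})$ is the pairing of the conormal covector $dh$ with the tangent Hamiltonian field of $F_\psi$, hence zero, and symmetrically for $\{F_\varphi,k\}_{G_*}$. For $\{h,k\}_{G_*}$ it is the coisotropy of $\mathcal{C}$, its defining property as the level surface used in the reduction, that finishes the job: contracting $P_{G_*}$ with the conormal covector $dk$ gives a vector tangent to $\mathcal{C}$, which again pairs to zero with the conormal $dh$. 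Consequently $\{\Pi\overline{\varphi},\Pi\overline{\psi}\}_{G_*}(ms^{-1})=\{F_\varphi,F_\psi\}_{G_*}(ms^{-1})=\{\varphi,\psi\}(ms^{-1})$, which is the assertion.

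The main obstacle I anticipate is exactly this comparison: $\Pi\overline{\varphi}$ is $N$-invariant only along $\mathcal{C}$ and is merely a rational function on $G_*$, so Proposition \ref{Reduce} cannot be applied to it verbatim, and the transverse behaviour of $\Pi\overline{\varphi}$ must be controlled by the coisotropy-plus-tangency computation above rather than ignored. The one technical point to verify carefully is that $\Pi\overline{\varphi}$ is genuinely regular in a full neighbourhood of $\Sigma$ in $G_*$, so that its transverse $G_*$-differential exists at $ms^{-1}$; this follows from the observation in the remark after the factorization proposition that the denominators of the coefficients $A_p$ are non-vanishing matrix coefficients on $NZs^{-1}N$ and cancel, making the $A_p$, and hence $\Pi\overline{\varphi}$, regular near $\Sigma$. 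Granting this, the cross-term computation is routine and completes the proof.
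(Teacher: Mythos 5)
Your proposal is correct, and its first paragraph is precisely the paper's (essentially one-line) argument: by Proposition \ref{proj} the function $\Pi\overline{\varphi}$ is $N$--invariant on $\mathcal{C}=NZs^{-1}N$, it agrees with $\overline{\varphi}=\varphi$ on the slice because the coefficients $A_p$ vanish there, hence $\Pi\overline{\varphi}|_{\mathcal{C}}=\varphi\circ\pi$, and likewise for $\psi$. Where you diverge is in the second half: the paper stops here, because formula (\ref{Pbr}) in Proposition \ref{Reduce} is already stated for \emph{arbitrary} (locally defined) smooth extensions of $\varphi\circ\pi$ and $\psi\circ\pi$, with no invariance required; so once $\Pi\overline{\varphi}$ and $\Pi\overline{\psi}$ are recognized as such extensions, the identity is immediate. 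Your introduction of auxiliary genuinely $N$--invariant extensions $F_\varphi,F_\psi$ and the subsequent cross-term computation (tangency of Hamiltonian fields of invariant functions plus coisotropy of $\mathcal{C}$ killing the conormal--conormal term) is in effect a re-derivation of the extension-independence that Proposition \ref{Reduce} already packages into (\ref{Pbr}). This costs you length but buys something the paper leaves implicit: a self-contained justification of why the choice of extension does not matter, which is the standard coisotropic-reduction mechanism. Your closing caveat is also well taken and is glossed over by the paper: for $\{\Pi\overline{\varphi},\Pi\overline{\psi}\}_{G_*}(ms^{-1})$ to be meaningful one needs $\Pi\overline{\varphi}$ to be differentiable in a full neighbourhood of $ms^{-1}$ in $G_*$, not merely regular along $\mathcal{C}$; the remark after formula (\ref{tind}) guarantees regularity of the restriction to $NZs^{-1}N$, so one should additionally note that the denominators in the $A_p$ are nonvanishing near the relevant points of the slice (or interpret the right-hand side on the dense open set where this holds). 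Neither issue is a gap in your logic; the argument goes through.
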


\end{document}